\documentclass[12pt]{article}
\usepackage{mathtools}
\usepackage[demo]{graphicx}
\usepackage[hidelinks]{hyperref}
\usepackage{multirow}
\usepackage{float}
\usepackage{diagbox}
\usepackage{indentfirst}
\usepackage{tikz}
\usepackage{amssymb}
\usepackage{amsmath,mathrsfs,amsfonts}
\usepackage{graphics}
\usepackage{amsthm}
\usepackage{setspace}
\usepackage{setspace}
\usepackage[mathscr]{eucal}
\usepackage{caption}
\usepackage{subcaption}
\usepackage{rotating}
\usetikzlibrary{calc,decorations.pathreplacing,fit}
\definecolor{rblockfill}{RGB}{246,247,249}
\tikzset{
    rpath/.style={line width=.9pt},
    rlocaledge/.style={draw=black!65,line width=.65pt},
    rchosenedge/.style={line width=1.05pt},
    rextra/.style={densely dashed,line width=1.2pt},
    rpathvertex/.style={circle,fill=black,inner sep=1.8pt,outer sep=0pt},
    rinnervertex/.style={circle,draw=black,fill=white,line width=.7pt,inner sep=2pt,outer sep=0pt},
    rchosenvertex/.style={circle,draw=black,fill=white,line width=1.2pt,inner sep=2.4pt,outer sep=0pt},
    rblockbox/.style={draw=black!55,fill=rblockfill,rounded corners=4pt,line width=.65pt}
}
\newtheorem{theorem}{Theorem}
\newtheorem{theo}[theorem]{Theorem}
\newtheorem{lem}[theorem]{Lemma}
\newtheorem{corollary}[theorem]{Corollary}

\newtheorem{proposition}[theorem]{Proposition}

 \theoremstyle{definition}
\newtheorem{defi}[theorem]{Definition}

\theoremstyle{remark}

\def\p{\overrightarrow{p}}

\newcounter{casenum}[theorem]

\newcounter{subcasenum}[theorem]

\newcounter{claimnum}[theorem]
\newcommand{\cla}{%
  \par
  \refstepcounter{claimnum}%
  \textbf{Claim \arabic{claimnum}}.\enspace\ignorespaces
}

\allowdisplaybreaks

\begin{document}
\thispagestyle{plain}
\enlargethispage{2pt}

\begin{center}
{\Large Extremal spectral radius of nonregular graphs\\
with a fixed odd maximum degree}
\end{center}
\pagestyle{plain}
\begin{center}
	{
		{\small Zejun Huang, Chenxi Yang \footnote{Corresponding author. \\ Email: zejunhuang@szu.edu.cn (Huang), yangchenxi2022@email.szu.edu.cn (Yang)}}\\[3mm]
		{\small School of Mathematical Sciences, Shenzhen University, Shenzhen 518060, China}\\
	}
\end{center}
\begin{center}
\begin{minipage}{140mm}
\begin{center}
{\bf Abstract}
\end{center}

{\small
For integers $n\ge3$ and $2\le\Delta\le n-1$, let $\lambda_1(n,\Delta)$ be the maximum adjacency spectral
radius among all connected nonregular graphs of order $n$ and maximum degree $\Delta$. Liu conjectured that  for
each fixed integer $\Delta\ge3$,
\[
\lim_{n\to\infty}n^2\bigl(\Delta-\lambda_1(n,\Delta)\bigr)=
\begin{cases}
(\Delta-1)\pi^2/4,&\text{if $\Delta$ is odd};\\
(\Delta-2)\pi^2/2,&\text{if $\Delta$ is even}.
\end{cases}
\]
He proved the case   $\Delta=3$ and $\Delta=4$. We prove   the  conjecture   when $\Delta\ge5$ is odd.

{\bf Keywords:} spectral radius; nonregular graph; maximum degree; effective resistance; Dirichlet energy
}

\end{minipage}
\end{center}

\section{Introduction and main result}

All graphs considered in this paper are finite, undirected and simple unless stated otherwise. For a graph $G$ of
order $n$, let $V(G)$ and $E(G)$ denote its vertex and edge sets, let $d_G(v)$ be the degree of a vertex $v$, and
let $\Delta(G)$ be the maximum degree. We write $A(G)$ for the adjacency matrix of $G$ and $\rho(G)$ for its
spectral radius. For a connected graph, a positive vector $x$ satisfying $A(G)x=\rho(G)x$ is called a
\emph{Perron vector} of $G$.

For integers $n\ge3$ and $2\le\Delta\le n-1$, let $\mathcal C(n,\Delta)$ be the family of all connected
nonregular graphs of order $n$ with maximum degree $\Delta$, and define
\[
\lambda_1(n,\Delta):=\max_{G\in\mathcal C(n,\Delta)}\rho(G).
\]
For each connected graph $G$, it is well known that $\rho(G)\le\Delta(G)$, with equality if and only if $G$ is regular. Thus
$\Delta(G)-\rho(G)$ measures how close $G$ is spectrally to a regular graph, and determining
$\Delta-\lambda_1(n,\Delta)$ asks how small this gap can be under the prescribed order and maximum degree.

Stevanovi\'c \cite{D} initiated the study of this problem by giving a positive lower bound $$\Delta-\lambda_1(n,\Delta)>\frac{1}{2 n(n \Delta-1) \Delta^2}.$$ Zhang \cite{Zhang} and Cioab\u{a}, Gregory and Nikiforov
\cite{CGN} subsequently improved the dependence on the graph parameters. Cioab\u{a} \cite{SMC} proved the
diameter bound
\[
\Delta-\lambda_1(n,\Delta)>\frac1{nD},
\]
where $D$ is the diameter of $G$. Liu, Shen and Wang \cite{LSW} established
\[
\Delta-\lambda_1(n,\Delta)\ge\frac{\Delta+1}{n(3n+2\Delta-4)}.
\]
Further estimates were obtained by Liu, Huang and You \cite{LHY}, Zhang \cite{WZ}, and Feng and Zhang \cite{FZ}.
For fixed $\Delta$, these results naturally lead to the determination of the leading constant on the scale $n^{-2}$.

Liu, Shen and Wang \cite{LSW} conjectured that
\[
\lim_{n\to\infty}\frac{n^2\bigl(\Delta-\lambda_1(n,\Delta)\bigr)}{\Delta-1}=\pi^2
\]
for each fixed $\Delta$. Liu \cite{LLL} disproved this conjecture for all $\Delta\ge3$ by constructing graphs
that satisfy
\begin{equation}\label{eq:liu-construction}
\limsup_{n\to\infty}n^2\bigl(\Delta-\lambda_1(n,\Delta)\bigr)\le
\begin{cases}
(\Delta-1)\pi^2/4,&\text{if $\Delta$ is odd},\\
(\Delta-2)\pi^2/2,&\text{if $\Delta$ is even}.
\end{cases}
\end{equation}
He modified the conjecture to be
\[
\lim_{n\to\infty}n^2\bigl(\Delta-\lambda_1(n,\Delta)\bigr)=
\begin{cases}
(\Delta-1)\pi^2/4,&\text{if $\Delta$ is odd},\\
(\Delta-2)\pi^2/2,&\text{if $\Delta$ is even},
\end{cases}
\]
and proved the cases $\Delta\in\{3,4\}$.

In this paper, we settle the conjecture for each odd maximum degree  $\Delta\ge 5$.

\begin{theorem}\label{th1}
For each fixed odd integer $\Delta\ge 5$,
\[
\lim_{n\to\infty}\frac{n^2\bigl(\Delta-\lambda_1(n,\Delta)\bigr)}{\Delta-1}=\frac{\pi^2}{4}.
\]
\end{theorem}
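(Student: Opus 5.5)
Since Liu's construction \eqref{eq:liu-construction} already gives $\limsup_{n\to\infty}n^2(\Delta-\lambda_1(n,\Delta))\le(\Delta-1)\pi^2/4$, I only need the matching lower bound
\[
\liminf_{n\to\infty}n^2\bigl(\Delta-\lambda_1(n,\Delta)\bigr)\ge\frac{(\Delta-1)\pi^2}{4}.
\]
I take an extremal graph $G\in\mathcal C(n,\Delta)$ with Perron vector $x$, normalized so that $\max_v x_v=1$. The starting point is the identity
\[
(\Delta-\rho)\sum_v x_v^2=\sum_{uv\in E}(x_u-x_v)^2+\sum_v(\Delta-d_G(v))x_v^2 .
\]
Let $S$ be the set of deficient vertices, those with $d_G(v)<\Delta$. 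Since $\Delta$ is odd and $\Delta$-regular graphs of odd degree need an even number of vertices, parity forces $\sum_v(\Delta-d(v))\ge1$, and in the extremal regime this deficiency is $1$ or $2$. Because $\Delta-\rho=O(n^{-2})$, the right-hand side is tiny. So $x$ is nearly constant on most of $G$, and $x$ is small only near $S$. The task becomes a discrete variational problem: minimize the Dirichlet energy plus the boundary penalty relative to $\|x\|_2^2$. I then compare this with the continuous problem $-f''=\mu f$ on $[0,L]$ with $f'(L)=0$ and $f(0)\approx0$, whose first eigenvalue is $\pi^2/(4L^2)$. This quarter-wave picture is where $\pi^2/4$ comes from.

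The key step is a resistance/energy inequality. Order the vertices by their graph distance from $S$ into layers $V_0=S,V_1,\dots,V_D$. For each $v$, I bound $x_v$ by integrating the "gradient" along a shortest path to $S$. Using the fact that the effective resistance from $S$ to the layers grows at most like $t/(\Delta-1)$ per vertex consumed, I aim to prove
\[
\sum_{uv\in E}(x_u-x_v)^2\ \ge\ (\Delta-1)\sum_t \frac{(m_{t+1}-m_t)^2}{|V_t|}\cdot(\text{edge count factor}),
\]
where $m_t$ is the average of $x$ on layer $t$. Heuristically, a layered $\Delta$-regular-like graph has about $(\Delta-1)|V_t|$ edges between consecutive layers per unit "length", and layer $t$ contains at least one vertex. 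Reparametrizing by cumulative vertex count $s=|V_0|+\dots+|V_t|\in[0,n]$ turns this into
\[
\int_0^n (\Delta-1)\,g'(s)^2\,ds\ \ge\ (\Delta-1)\frac{\pi^2}{4n^2}\int_0^n g(s)^2\,ds\,(1-o(1))
\]
by the one-dimensional Wirtinger/Hardy inequality with a Dirichlet condition at $s=0$. I will handle the nonzero boundary value at $S$ with the penalty term $\sum_{v\in S}(\Delta-d(v))x_v^2$: it forces $g(0)=O(n^{-1}\|g\|)$ or else it pays more than $n^{-2}$.

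The main obstacle is the cut inequality for odd $\Delta$: I must show that every "slice" of the graph at distance $t$ from $S$ carries conductance at most about $(\Delta-1)$ times the vertex mass it separates. This is exactly where the optimal constant $\Delta-1$ (rather than $\Delta-2$ for even $\Delta$) appears, through edge-cut parity: a vertex set with odd degree sum separated from the deficient part must send an odd number of edges across, so bridges of size $1$ behave like vertex-mass at least $(\Delta+1)/(\Delta-1)$. My plan is to work with effective resistance $R(S,v)$ in $G$. I will prove $R(S,\{v: \text{dist}\ge t\})\ge c_t$ with $\sum$ increments bounded below by (vertex mass)$/(\Delta-1)$, using Nash-Williams with the edge cuts $E(V_t,V_{t+1})$. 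I expect the hard combinatorial part to be bounding $|E(V_t,V_{t+1})|$ in terms of the sizes $|V_{\le t}|$ for $\Delta\ge5$. Blocks that are not paths, which do not occur for $\Delta=3$, need a local counting argument showing that each block of $k$ vertices contributes resistance at least about $k/(\Delta-1)\cdot$(cut size)$^{-1}$ correctly. Finally, I will combine this with the Rayleigh quotient to conclude the $\liminf$ bound.
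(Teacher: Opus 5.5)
There is a genuine gap, and it sits exactly at the step you call the main obstacle: the resistance estimate you plan to prove points the wrong way. A lower bound on $\Delta-\rho$ is a lower bound on the Dirichlet energy of the grounded Perron vector. Resistance controls such an energy only through inequalities like $(y_w-y_v)^2\le R(v,w)\,\mathcal E(y)$, and these need an \emph{upper} bound on $R$. Concretely, you need: after contracting $\{x\le\alpha\}$ (containing the ground) and $\{x\ge\beta\}$, the resistance between them is at most $M/(\Delta-1)+O(1)$, where $M$ is the number of vertices in between.

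Your plan does the opposite. Proving $R(S,\{v:\operatorname{dist}(v,S)\ge t\})\ge c_t$ via Nash-Williams, and bounding $|E(V_t,V_{t+1})|$, gives lower bounds on resistance. Such bounds only certify that some low-energy functions exist, which never bounds every Rayleigh quotient from below. Your phrase ``conductance at most $(\Delta-1)$ times the vertex mass'' is reversed in the same way. Cut sizes alone cannot give the needed upper bound; you have to exhibit good flows.

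That is the real content of the paper, and your sentence ``the effective resistance \ldots\ grows at most like $t/(\Delta-1)$ per vertex'' simply assumes it. The paper proves $(\Delta-1)R(s,t)\le n+\Delta-1-d(s)-d(t)$ in several steps:
\begin{itemize}
\item it takes a shortest $s$--$t$ path and uses the count $(\Delta-1)\ell-\sigma=n+\Delta-1-d(s)-d(t)$ of off-path neighbours;
\item it builds series--parallel detours through vertices with two or three path neighbours;
\item it treats the saturated blocks $A_r$ with $a_r=\Delta-2$ by a separate bypass argument;
\item it then reduces the possibly unbounded terminal degrees by resistance-nondecreasing edge swaps, and restores degrees with pendant gadgets built from $K_{\Delta+2}$. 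This gadget is where oddness of $\Delta$ enters, not any parity of layer cuts.
\end{itemize}
Also, no layer-by-layer version of your displayed inequality can hold with a factor near $1$. In a chain of copies of $K_{\Delta+1}$ minus an edge joined by bridges, a bridge layer has $|V_t|=1$, and its single edge has resistance $1>1/(\Delta-1)$. Only an aggregated bound with an additive constant is true.

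Second, the reduction to one dimension via distance layers and layer averages fails even if you grant a resistance bound. By Jensen, $\sum_{v\in V_t}x_v^2\ge|V_t|m_t^2$, so passing to averages \emph{loses} $\ell^2$ mass. That is the wrong direction for a lower bound on a Rayleigh quotient. Moreover, an energy inequality in terms of $m_{t+1}-m_t$ is not available for functions that vary within layers.

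The paper instead slices along level sets of $x$ itself. Let $\Phi(t)$ be the crossing sum at level $t$ and $r(t)=\int_0^t ds/\Phi(s)$. Then:
\begin{itemize}
\item the coarea formula makes $\int_0^\ell|\psi'|^2=n^2(\Delta-\rho)/(\Delta-1)$ exact;
\item $x_v=\psi(\eta_v)$ keeps the mass exact;
\item comparing $r(\beta)-r(\alpha)$ with the contracted resistance between $\{x\le\alpha\}$ and $\{x\ge\beta\}$ yields $\eta_i\le\varepsilon_n+(i-1)/n$.
\end{itemize}
The last bound is exactly what the monotone comparison and the Dirichlet--Neumann Poincar\'e inequality on $[0,1+\varepsilon_n]$ need. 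Finally, the paper grounds each vertex $v$ to a new vertex $v_0$ by $\Delta-d(v)$ parallel edges. This turns your penalty term into ordinary Dirichlet energy with the exact condition $x_{v_0}=0$. So you need neither the unproved claim that the extremal deficiency is $1$ or $2$ nor your estimate on $g(0)$.
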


Liu's construction \cite{LLL} gives the required $\limsup$ bound in Theorem \ref{th1}; the new contribution is the
matching $\liminf$ bound for each fixed odd integer $\Delta\ge5$.

\section{Preliminaries}\label{sec:preliminaries}

Throughout this paper, $|V(G)|=n$, and vectors in $\mathbb{R}^n$ are indexed by $V(G)$.
Liu's graph constructions give the following upper bound for the spectral gap.

\begin{lem}[{\cite{LLL}}]\label{lem:liu-upper-bound}
For each fixed odd integer $\Delta\ge3$, there are graphs $H_n\in\mathcal{C}(n,\Delta)$ for all sufficiently large $n$ such that
\begin{equation}\label{eq:liu-gap-upper-bound}
0<\Delta-\lambda_1(n,\Delta)\le\Delta-\rho(H_n)=(1+o(1))\frac{(\Delta-1)\pi^2}{4n^2}
\end{equation}
as $n\to\infty$.
\end{lem}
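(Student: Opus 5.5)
The strict inequality $0<\Delta-\lambda_1(n,\Delta)$ is immediate. Every graph in $\mathcal C(n,\Delta)$ is connected and nonregular, so $\rho(G)<\Delta$, and $\mathcal C(n,\Delta)$ is a finite set. The middle inequality $\Delta-\lambda_1(n,\Delta)\le\Delta-\rho(H_n)$ is just the definition of $\lambda_1$ once $H_n\in\mathcal C(n,\Delta)$. So the content is to exhibit $H_n$ and compute the asymptotics of $\Delta-\rho(H_n)$. I would take Liu's family \cite{LLL} and reprove the estimate in the following form; I am recalling its shape rather than its exact gadgets, so the gadget details below are a guess.

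\textbf{Construction.} I would build $H_n$ as a long linear chain of $m\approx n/s$ identical gadgets $B_1,\dots,B_m$, each of bounded size $s$, with consecutive gadgets joined by $c$ edges. Inside the chain every vertex has degree exactly $\Delta$, and all the degree deficiency sits in one bounded end gadget. Since $\Delta$ is odd, the degree sum forces $n$ and the deficiency to have compatible parity. I would therefore allow a bounded number of end-gadget variants so that every large $n$ (even or odd) is realized. The heuristic is a discrete wave equation on the chain with a reflecting (Neumann) boundary at the regular end and an absorbing end at the defect. A bounded-size defect on a chain of length $m\to\infty$ acts like a Dirichlet condition at leading order. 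This gives the first Dirichlet--Neumann eigenvalue $(\pi/(2m))^2$ times the conductance-to-mass ratio $c/s$, so
\[
\Delta-\rho(H_n)\approx \frac{c}{s}\cdot\frac{\pi^2}{4m^2}=\frac{cs\,\pi^2}{4n^2}.
\]
The gadgets must therefore be chosen with $cs=\Delta-1$, i.e.\ effective resistance about $1/(\Delta-1)$ per vertex along the chain. This normalization is exactly where the odd case differs from the even case.

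\textbf{Computation.} I would choose the gadgets so that the chain admits an equitable partition into layers. Then $\rho(H_n)$ equals the spectral radius of a tridiagonal quotient matrix $Q$, which is a constant-coefficient Jacobi matrix perturbed only at the two ends.

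For the bound $\Delta-\rho\le(1+o(1))(\Delta-1)\pi^2/(4n^2)$, I would use the test vector $x_j=\cos\bigl(\pi j/(2m+O(1))\bigr)$ on layer $j$ in the Rayleigh quotient for $Q$ (symmetrized by the layer sizes). The bulk contributes $\Delta-(\Delta-1)\pi^2/(4n^2)$ up to $O(n^{-3})$. The end corrections are $O(n^{-3})$ because $x$ is $O(1/m)$ near the defect end.

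For the matching lower bound on the gap, I would solve the constant-coefficient recurrence exactly. Its eigenvectors are $\cos(j\theta+\phi)$, and the two boundary rows give a transcendental equation for $\theta$ whose root is $\theta=\pi/(2m)+O(m^{-2})$. This yields $\rho=\Delta-(c/s)\,\theta^2(1+o(1))$.

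\textbf{Main obstacle.} The hard part is designing gadgets with $cs=\Delta-1$ that are simple, $\Delta$-regular inside, and compatible with an equitable layering, for every odd $\Delta\ge3$ and every large $n$ of either parity. I would first try layers of two vertices joined to adjacent layers by complete bipartite-type connections, padded with internal cliques to reach degree $\Delta$. Then I would check that the end correction in the boundary equation really is $O(1)$ in the phase and not $O(m)$, so that it does not shift the constant $\pi^2/4$.
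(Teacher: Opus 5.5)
\textbf{There is a genuine gap: the graphs $H_n$ are never constructed, and the framework you set up to find them cannot reach the constant $\Delta-1$.}

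The paper gives no proof of this lemma; it imports Liu's explicit family from \cite{LLL}. Your first two inequalities are fine, but the entire content is the construction of $H_n$, which you leave open. Your criterion $cs=\Delta-1$ comes from taking the Perron vector constant on each gadget, and your computation assumes a constant-coefficient Jacobi quotient. Both presuppose a homogeneous chain whose bulk layers have $s$ vertices, each vertex having $p$ neighbours in each adjacent layer. Then $c=sp$ and the constant is $s^2p$. Simplicity with degree $\Delta$ forces $\Delta\le(s-1)+2p$, and also $p\le1$ if $s=1$. For $\Delta\ge3$ the equation $s^2p=\Delta-1$ is therefore impossible: $s=1$ gives $\Delta=2$, while $s\ge2$ gives $p(s^2-2)\le s-2$, hence $s^2\le s$. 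Your concrete suggestion (two-vertex layers with complete bipartite joins) gives $s^2p=8=2(\Delta-1)$ for $\Delta=5$, i.e.\ a gap twice too large. It cannot give degree $\Delta\ge7$ at all, since a vertex then has at most $5$ neighbours along the chain.

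\textbf{The missing idea.} The constant $\Delta-1$ is reached by putting single bottleneck edges in series with dense blocks whose internal resistance is not negligible. The right normalization is (vertices per period)$/$(effective resistance per period)$\,=\Delta-1$, not $cs=\Delta-1$. For example, take copies of $K_{\Delta+1}$ minus an edge $a_ib_i$, joined by bridges $b_ia_{i+1}$. Each copy contributes resistance $1+2/(\Delta-1)=(\Delta+1)/(\Delta-1)$ per $\Delta+1$ vertices, so the constant is $\Delta-1$, even though $cs=\Delta+1$ there.

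\textbf{Where oddness enters.} It enters through parity, not through this normalization. With all the deficiency at one end, deleting a bridge leaves a side in which every vertex has degree $\Delta$ except one of degree $\Delta-1$. The handshake lemma permits this only for odd $\Delta$. The same applies to the cap closing the Neumann end; the $K_{\Delta+2}$-gadget from the proof of Theorem~\ref{thm:terminal-resistance} works there. You also still need bounded end variants realizing every residue of $n$ modulo the period; in particular, even $n$ forces total deficiency $2$.

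\textbf{The computation must change too.} The quotient is then a period-three Jacobi matrix, so neither the ansatz $\cos(j\theta+\phi)$ nor a cosine in the layer index is appropriate. For this chain, a layer-index cosine yields the constant $(2\Delta-1)(\Delta+1)/9$, which exceeds $\Delta-1$ by $2(\Delta-2)^2/9$. The test vector must instead be a cosine in the cumulative-resistance coordinate. That Rayleigh-quotient upper bound on the gap is all Theorem~\ref{th1} needs, since for $\Delta\ge5$ the matching lower bound is Proposition~\ref{prop:universal-gap-bound}.
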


To prove Theorem \ref{th1}, it therefore remains to establish the asymptotic lower bound with the same leading constant.
We first introduce the graph energy and effective resistance.
Write $L=L(G)$ for the \emph{Laplacian matrix} of $G$, which is defined by
\[
(Ly)_v=\sum_{u\in N(v)}(y_v-y_u),\qquad y\in\mathbb{R}^n,\ v\in V.
\]

For any real vector $y$ indexed by $V$, we define its \emph{Dirichlet energy} on $G$ by
\[
\mathcal E_G(y):=\sum_{uv\in E}(y_u-y_v)^2=y^{\mathsf T}Ly.
\]
We use the same energy definition for multigraphs, counting parallel edges separately.
Degrees in a multigraph are counted with multiplicity.

With one reference direction on each edge of a multigraph $H$, write $e=ab$ when that direction is from $a$ to $b$.
Let $\bar e$ denote the same edge with the reverse direction.
A real edge function $j$ is signed relative to the reference directions and extends to reversed edges by antisymmetry:
\[
j(\bar e)=-j(e),\qquad e\in E(H).
\]
Recall that divergence is net outflow:
\[
(\operatorname{div}j)(u):=\sum_{e=ua\in E(H)}j(e)-\sum_{e=au\in E(H)}j(e),\qquad u\in V(H).
\]
Changing a reference direction and the corresponding sign of $j(e)$ leaves the flow and its divergence unchanged.

\begin{samepage}
\begin{defi}\label{def:unit-flow}
Let $H$ be a connected multigraph, and let $v,w\in V(H)$ be distinct.
A \emph{unit $w$--$v$ flow} is a function $j:E(H)\to\mathbb R$ such that for each $u\in V(H)$,
\[
(\operatorname{div}j)(u)=\begin{cases}
1, & \text{if }u=w;\\
-1, &\text{if } u=v;\\
0, &\text{if } u\notin\{v,w\}.
\end{cases}
\]
\end{defi}
\end{samepage}

View $w$ and $v$ as the positive and negative terminals, respectively.
A unit $w$--$v$ flow represents one unit of current entering at $w$ and leaving at $v$.
We define the \emph{energy of a flow} $j$ to be $\sum_{e\in E(H)}j(e)^2$, where each edge is counted once;
physically, it is the Joule dissipation per unit time. This energy is independent of the reference edge directions.

A real function $y:V(H)\to\mathbb R$, with $y_u=y(u)$, represents a \emph{vertex potential}.
The \emph{current induced by $y$} is
\[
j(e)=y_a-y_b,\qquad e=ab\in E(H).
\]
Consequently,
\[
\sum_{e\in E(H)}j(e)^2=\sum_{e=ab\in E(H)}(y_a-y_b)^2=\mathcal E_H(y).
\]
Thus, the Dirichlet energy of $y$ equals the energy of its induced current.

A flow $j$ is induced by a vertex potential if and only if its signed sum along every closed walk is zero.
We consider the least possible energy among all unit flows with the prescribed source $w$ and sink $v$.

\begin{samepage}
\begin{defi}\label{def:effective-resistance}
Let $H$ be a connected multigraph.
For distinct $v,w\in V(H)$, their \emph{effective resistance} $R(v,w)$ is defined by
\[
R(v,w):=\min\left\{\sum_{e\in E(H)}j(e)^2:j\text{ is a unit }w\text{--}v\text{ flow on }H\right\}.
\]
\end{defi}
\end{samepage}
\pagebreak[0]

\begin{samepage}
The unit flows form a nonempty closed affine set in $\mathbb R^{E(H)}$,
so the squared Euclidean norm attains a minimum on this set.
If $j_1$ and $j_2$ both attain the minimum, their average is also a unit flow, and
\[
\sum_{e\in E(H)}\left(\frac{j_1(e)+j_2(e)}2\right)^2=R(v,w)-\frac14\sum_{e\in E(H)}\bigl(j_1(e)-j_2(e)\bigr)^2.
\]
Minimality forces $j_1=j_2$, so the minimizing flow is unique.
\end{samepage}

\begin{samepage}
Let $j_*$ be the unique unit $w$--$v$ flow attaining the minimum in Definition \ref{def:effective-resistance}.
Thomson's principle \cite[Section 1.3.5]{DS} states that $j_*$ is the steady unit current induced by a vertex potential $y$:
\[
j_*(e)=y_a-y_b\qquad(e=ab\in E(H)).
\]
Consequently,
\[
R(v,w)=\min_j\sum_{e\in E(H)}j(e)^2=\sum_{e\in E(H)}j_*(e)^2=\mathcal E_H(y),
\]
where the minimum is over all unit $w$--$v$ flows $j$ on $H$.
\end{samepage}

We also express resistance in terms of potentials with a prescribed difference between the two terminals.

\begin{lem}\label{lem:dirichlet-resistance}
Let $H$ be a connected multigraph, and let $v,w\in V(H)$ be distinct.
For each real function $y:V(H)\to\mathbb R$,
\begin{equation}\label{eq:resistance-energy-comparison}
(y_w-y_v)^2\le R(v,w)\mathcal E_H(y).
\end{equation}
If a vertex potential $q$ induces a unit $w$--$v$ flow of minimum energy, then
\[
q_w-q_v=\mathcal E_H(q)=R(v,w).
\]
Moreover,
\begin{equation}\label{eq:dirichlet-resistance}
\frac1{R(v,w)}=\min\{\mathcal E_H(y):y:V(H)\to\mathbb R,\ y_v=0,\ y_w=1\}.
\end{equation}
The minimum is unchanged if one also requires $0\le y_u\le1$ for all $u\in V(H)$.
\end{lem}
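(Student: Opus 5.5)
The plan is to prove the three assertions in order, using only Definition \ref{def:effective-resistance} and Thomson's principle as recalled above.

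\emph{Step 1: the pairing identity.} For any flow $j$ and any potential $y$,
\[
\sum_{e=ab\in E(H)}j(e)(y_a-y_b)=\sum_{u\in V(H)}y_u\,(\operatorname{div}j)(u).
\]
This follows by regrouping: each edge $e=ab$ contributes $j(e)y_a$ to $u=a$ and $-j(e)y_b$ to $u=b$. If $j$ is a unit $w$--$v$ flow, the right-hand side equals $y_w-y_v$.

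\emph{Step 2: the inequality \eqref{eq:resistance-energy-comparison}.} Take $j=j_*$, the minimizing unit $w$--$v$ flow. By Step 1 and Cauchy--Schwarz,
\[
(y_w-y_v)^2=\Bigl(\sum_{e}j_*(e)(y_a-y_b)\Bigr)^2\le\sum_e j_*(e)^2\cdot\sum_e(y_a-y_b)^2=R(v,w)\,\mathcal E_H(y).
\]

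\emph{Step 3: the identity for $q$.} Let $q$ induce the minimizing flow $j_*$, so $j_*(e)=q_a-q_b$. Apply Step 1 with $j=j_*$ and $y=q$:
\[
q_w-q_v=\sum_e j_*(e)^2=R(v,w).
\]
The Dirichlet energy $\mathcal E_H(q)$ is the energy of the induced current $j_*$, so it also equals $R(v,w)$.

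\emph{Step 4: the variational formula \eqref{eq:dirichlet-resistance}.}
\begin{itemize}
\item Lower bound. Since $H$ is connected and $v\ne w$, we have $R(v,w)>0$: otherwise $j_*=0$, which has divergence $0\ne1$ at $w$. For any $y$ with $y_v=0$ and $y_w=1$, Step 2 gives $1\le R(v,w)\,\mathcal E_H(y)$, so $\mathcal E_H(y)\ge 1/R(v,w)$.
\item Attainment. Set $y^*=(q-q_v)/R(v,w)$. By Step 3, $y^*_v=0$ and $y^*_w=1$. Since energy is invariant under adding constants and scales quadratically,
\[
\mathcal E_H(y^*)=\frac{\mathcal E_H(q)}{R(v,w)^2}=\frac{1}{R(v,w)}.
\]
\end{itemize}

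\emph{Step 5: the box constraint $0\le y_u\le 1$.} Adding this constraint can only increase the minimum. For the reverse direction, take any admissible $y$ and truncate it: $\tilde y_u=\min(1,\max(0,y_u))$. Truncation is $1$-Lipschitz, so $|\tilde y_a-\tilde y_b|\le|y_a-y_b|$ on every edge, hence $\mathcal E_H(\tilde y)\le\mathcal E_H(y)$. It also preserves $\tilde y_v=0$ and $\tilde y_w=1$. Therefore the minimum over the box-constrained class is at most the unconstrained minimum, and the two coincide.

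The only points needing care are the sign conventions in the summation by parts in Step 1, namely that divergence is net outflow and $w$ is the source. Everything else is routine.
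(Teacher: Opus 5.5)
Your proposal is correct and follows the paper's proof essentially step for step. Both use the pairing identity $\sum_{e=ab}j_*(e)(y_a-y_b)=y_w-y_v$ with Cauchy--Schwarz, Thomson's principle to obtain $q$ and the rescaled minimizer $(q-q_v)/R(v,w)$, and truncation to $[0,1]$; your explicit check that $R(v,w)>0$, which the paper uses tacitly when dividing by $R(v,w)$, is a small but welcome addition.
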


\begin{proof}

Choose a unit $w$--$v$ flow $j_*$ attaining the minimum in Definition \ref{def:effective-resistance}.
For any potential $y$, the term $j_*(e)(y_a-y_b)$ on an edge $e=ab$ contributes $j_*(e)y_a$ at $a$
and $-j_*(e)y_b$ at $b$. Summing the contributions at each vertex $u$ gives the coefficient
$(\operatorname{div}j_*)(u)$ of $y_u$.
By Definition \ref{def:unit-flow}, it follows that
\[
y_w-y_v=\sum_{u\in V(H)}y_u(\operatorname{div}j_*)(u)=\sum_{e=ab\in E(H)}j_*(e)(y_a-y_b).
\]
By Cauchy--Schwarz,
\[
(y_w-y_v)^2\le\left(\sum_{e\in E(H)}j_*(e)^2\right)\mathcal E_H(y)=R(v,w)\mathcal E_H(y).
\]
This proves \eqref{eq:resistance-energy-comparison} and gives the lower bound in \eqref{eq:dirichlet-resistance}.
By Thomson's principle, $j_*$ is induced by a potential $q$.
Applying the preceding summation identity to $q$ gives
\[
q_w-q_v=\sum_{e=ab\in E(H)}j_*(e)(q_a-q_b)=\sum_{e\in E(H)}j_*(e)^2=R(v,w).
\]
Also $\mathcal E_H(q)=R(v,w)$, so $y_u=(q_u-q_v)/R(v,w)$ for $u\in V(H)$
has $y_v=0$, $y_w=1$, and $\mathcal E_H(y)=1/R(v,w)$.
Finally, replacing $y_u$ by $\min\{1,\max\{0,y_u\}\}$ for each $u\in V(H)$
preserves the terminal values and cannot increase any squared difference along an edge.
Hence the restriction to $[0,1]$ does not change the minimum.
\end{proof}

By Lemma \ref{lem:dirichlet-resistance}, scaling the minimizing unit flow and its potential by $I>0$
gives total current $I$ and terminal voltage difference $U=IR(v,w)$.
Hence $R(v,w)=U/I$, and the dissipated power is $I^2R(v,w)=UI$, as in the physical interpretation of resistance.

For a graph consisting of a single edge $e=uv$, the unit $v$--$u$ flow has $|j(e)|=1$, so $R(u,v)=1$.
Thus, under our energy convention, an individual edge represents a unit resistor;
the effective resistance between its endpoints in a larger network can be smaller.

The following lemma shows that this effective resistance satisfies the usual series and parallel laws.

\begin{lem}[{\cite[Section 1.3.4]{DS}}]\label{lem:series-parallel}
Let $m\ge1$, and let $H_1,\ldots,H_m$ be pairwise vertex-disjoint connected multigraphs,
with distinct terminals $u_i,v_i\in V(H_i)$ for $1\le i\le m$.
Write $R_i=R(u_i,v_i)$, computed in $H_i$.
Both constructions below retain every edge and identify only the specified vertices.
\begin{enumerate}
\item Identifying $v_i$ with $u_{i+1}$ for $1\le i<m$ gives the \textit{series connection}, in which
\[
R(u_1,v_m)=\sum_{i=1}^{m}R_i.
\]
\par
\item Identifying $u_1,\ldots,u_m$ with a single vertex $u$ and $v_1,\ldots,v_m$ with a single vertex $v$
gives the \textit{parallel connection}, in which
\[
R(u,v)=\left(\sum_{i=1}^{m}\frac1{R_i}\right)^{-1}.
\]
\par
\end{enumerate}
\end{lem}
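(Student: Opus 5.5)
The plan is to prove both laws from the variational characterization in Lemma \ref{lem:dirichlet-resistance} together with Thomson's principle (Definition \ref{def:effective-resistance}), obtaining for each identity an upper bound from a test flow and a lower bound from a test potential. In the glued graph $H$ every edge belongs to exactly one $H_i$, so both the flow energy $\sum_e j(e)^2$ and the Dirichlet energy $\mathcal E_H(y)$ split as sums over $i$ of the corresponding energies on $H_i$. The whole argument rests on this additivity.

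\emph{Series connection.} For the upper bound, let $j_i$ be the optimal unit $v_i$--$u_i$ flow on $H_i$ (current enters at $v_i$ and leaves at $u_i$, matching the orientation of the unit $w$--$v$ convention with $w=v_m$, $v=u_1$). Concatenate these flows on $H$. At an identified vertex $v_i=u_{i+1}$, the divergence from $j_i$ is $+1$ and from $j_{i+1}$ it is $-1$, so the net divergence is $0$. All other interior vertices already have divergence $0$, and the divergences at $v_m$ and $u_1$ are $+1$ and $-1$. Hence the concatenation is a unit flow, and its energy is $\sum_i R_i$, which gives $R(u_1,v_m)\le\sum_i R_i$.

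For the lower bound, take any $y$ on $H$ and apply \eqref{eq:resistance-energy-comparison} in each $H_i$:
\[
|y_{v_i}-y_{u_i}|\le\sqrt{R_i\,\mathcal E_{H_i}(y)}.
\]
Telescoping over $i$ and applying Cauchy--Schwarz gives
\[
(y_{v_m}-y_{u_1})^2\le\Bigl(\sum_i R_i\Bigr)\sum_i\mathcal E_{H_i}(y)=\Bigl(\sum_i R_i\Bigr)\mathcal E_H(y).
\]
Now take $y$ with $y_{u_1}=0$ and $y_{v_m}=1$ and minimizing $\mathcal E_H(y)$. By \eqref{eq:dirichlet-resistance} this minimum equals $1/R(u_1,v_m)$, so the inequality yields $R(u_1,v_m)\ge\sum_i R_i$.

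\emph{Parallel connection.} It is cleanest to work with potentials via \eqref{eq:dirichlet-resistance}. Any $y$ on $H$ with $y_u=0$ and $y_v=1$ restricts to admissible potentials on each $H_i$. Therefore
\[
\mathcal E_H(y)=\sum_i\mathcal E_{H_i}(y|_{H_i})\ge\sum_i 1/R_i.
\]
Conversely, the optimal potentials $y^{(i)}$ on the $H_i$ all take the value $0$ at $u_i$ and $1$ at $v_i$, so they agree at the identified vertices and glue to a well-defined potential on $H$. This is the only point where vertex identification must be checked, and it holds because the $H_i$ share only $u$ and $v$. The glued potential achieves $\sum_i 1/R_i$. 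Hence $1/R(u,v)=\sum_i 1/R_i$.

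The main care point is bookkeeping rather than difficulty. One must get the sign and orientation conventions for divergence right in the series gluing, and one must confirm that the minimizers exist and are compatible at the shared vertices; Lemma \ref{lem:dirichlet-resistance} supplies both. One should also note that $H$ is connected, so that $R$ is defined there: each $H_i$ is connected and the gluings link them together.
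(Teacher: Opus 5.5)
\textbf{There is a gap in the series lower bound: the inequality you derive points the wrong way.} Your parallel-connection argument is correct, and so is the flow-concatenation upper bound $R(u_1,v_m)\le\sum_iR_i$ in the series case. The problem is the next step. Substitute the minimizer ($y_{u_1}=0$, $y_{v_m}=1$, $\mathcal E_H(y)=1/R(u_1,v_m)$) into $(y_{v_m}-y_{u_1})^2\le(\sum_iR_i)\,\mathcal E_H(y)$. This gives $1\le(\sum_iR_i)/R(u_1,v_m)$, i.e.\ $R(u_1,v_m)\le\sum_iR_i$, which is the upper bound you already had.

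This cannot be fixed by rearranging. By \eqref{eq:resistance-energy-comparison} and \eqref{eq:dirichlet-resistance}, an inequality $(y_{v_m}-y_{u_1})^2\le C\,\mathcal E_H(y)$ holding for \emph{all} $y$ is equivalent to $R(u_1,v_m)\le C$. So a Cauchy--Schwarz estimate over all potentials can only ever bound $R$ from above. To bound $R$ from below you need one of two things: a single test potential with small energy, which bounds the minimum in \eqref{eq:dirichlet-resistance} from above, or a lower bound on the energy of \emph{every} unit flow.

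Either repair works.
\begin{enumerate}
\item Test potential. Put $S=\sum_iR_i$, and let $y^{(i)}$ be the minimizer on $H_i$, so $y^{(i)}_{u_i}=0$, $y^{(i)}_{v_i}=1$ and $\mathcal E_{H_i}(y^{(i)})=1/R_i$. On $V(H_i)$ define $y=(R_1+\cdots+R_{i-1}+R_iy^{(i)})/S$. Both definitions give the value $(R_1+\cdots+R_i)/S$ at $v_i=u_{i+1}$, so $y$ is well defined. Also $y_{u_1}=0$, $y_{v_m}=1$, and $\mathcal E_H(y)=\sum_i(R_i/S)^2/R_i=1/S$. Hence $1/R(u_1,v_m)\le1/S$, i.e.\ $R(u_1,v_m)\ge S$.
\item Flows. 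Restrict an arbitrary unit $v_m$--$u_1$ flow $j$ on $H$ to $E(H_i)$. Its divergence vanishes off $\{u_i,v_i\}$ and sums to zero over $V(H_i)$, so it equals $c_i$ at $v_i$ and $-c_i$ at $u_i$. Zero divergence of $j$ at $v_i=u_{i+1}$ forces $c_i=c_{i+1}$, and divergence $-1$ at $u_1$ forces $c_1=1$. Thus every restriction is a unit $v_i$--$u_i$ flow, and the energy of $j$ is at least $\sum_iR_i$.
\end{enumerate}

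For reference, the paper gives no proof of this lemma; it cites Doyle--Snell, where the laws follow from Kirchhoff's current and voltage laws. Once this direction is fixed, your variational argument is a valid self-contained alternative that fits the paper's definition of $R$ as minimal flow energy.
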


We will also use the following basic comparison properties of effective resistance.
When comparing different networks, a subscript on $R$ specifies the network in which the resistance is computed.

\begin{lem}\label{lem:resistance-operations}
Let $H$ be a connected multigraph, and let $v,w\in V(H)$ be distinct.
\begin{enumerate}
\item\label{item:resistance-subgraph} If $K$ is a connected subgraph of $H$ containing $v,w$, then
\[
R_H(v,w)\le R_K(v,w).
\]
\item\label{item:resistance-attachment} Let $F$ be a connected multigraph with $V(F)\cap V(H)=\varnothing$,
and let $a\in V(H)$ and $b\in V(F)$.
If $K$ is obtained from the disjoint union of $H$ and $F$ by adding the edge $ab$, then
\[
R_K(v,w)=R_H(v,w).
\]
\item For each $u\in V(H)\setminus\{v,w\}$,
\[
R_H(v,w)\le R_H(v,u)+R_H(u,w).
\]
\end{enumerate}
\end{lem}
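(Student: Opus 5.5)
The three parts all follow from Definition \ref{def:effective-resistance} together with Lemma \ref{lem:dirichlet-resistance}. Parts 1 and 2 compare the minimizing unit flows of two networks; part 3 uses the potential characterization.

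For part \ref{item:resistance-subgraph}, let $j$ be the minimizing unit $w$--$v$ flow on $K$. Extend it by zero to $E(H)\setminus E(K)$. The divergence at each vertex of $K$ is unchanged, and the divergence at each vertex of $H$ outside $K$ is $0$. So the extension is a unit $w$--$v$ flow on $H$ with the same energy, and the minimum over flows on $H$ is at most $R_K(v,w)$.

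For part \ref{item:resistance-attachment}, the inequality $R_K(v,w)\le R_H(v,w)$ follows in the same way: extend the minimizing flow on $H$ by zero to all edges of $F$ and to the new edge $ab$. For the reverse inequality, take the minimizing unit $w$--$v$ flow $j$ on $K$. The edge $ab$ is a bridge, and $v,w$ both lie in $H$. Summing the divergences over $V(F)$ therefore gives $\pm j(ab)=0$, since all edges inside $F$ cancel and each vertex of $F$ has divergence $0$. Once $j(ab)=0$, the restriction of $j$ to $E(H)$ has the correct divergence at every vertex of $H$, so it is a unit $w$--$v$ flow on $H$. Its energy is at most that of $j$, which gives $R_H(v,w)\le R_K(v,w)$.

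For part 3, I would superpose flows. Let $j_1$ be the minimizing unit $u$--$v$ flow and $j_2$ the minimizing unit $w$--$u$ flow, with potentials $q_1,q_2$ from Thomson's principle. The sum $j_1+j_2$ has divergence $1$ at $w$, $-1$ at $v$, and $0$ at every other vertex, including $u$, so it is a unit $w$--$v$ flow. Its energy is
\[
R_H(v,u)+R_H(u,w)+2\sum_e j_1(e)j_2(e).
\]
This is not obviously at most the sum of the two resistances, so a direct bound is needed for the cross term.

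The cross term is $\sum_{e=ab}j_1(e)\bigl((q_2)_a-(q_2)_b\bigr)$. By the summation identity in the proof of Lemma \ref{lem:dirichlet-resistance}, it equals $(q_2)_u-(q_2)_v$. By the maximum principle, $q_2$ is harmonic off $\{w,u\}$, so it attains its minimum over $V(H)$ at the sink $u$. Hence $(q_2)_u-(q_2)_v\le0$, and the energy of $j_1+j_2$ is at most $R_H(v,u)+R_H(u,w)$. Minimality in Definition \ref{def:effective-resistance} then gives the inequality.

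The main obstacle is justifying the maximum principle. I would argue directly: at any vertex $x\notin\{w,u\}$, the condition $\operatorname{div}j_2(x)=0$ says $(q_2)_x$ is the average of its neighbours' values. If the minimum were attained only away from $u$, take a minimizing vertex $x$ adjacent to a vertex with a larger value; such an $x$ exists by connectivity, since $q_2$ is not constant (its terminal values differ by $R_H(u,w)>0$). If $x\notin\{w,u\}$, the averaging is contradicted. If $x=w$, the divergence $+1$ at $w$ gives $\sum_{b\in N(w)}\bigl((q_2)_w-(q_2)_b\bigr)=1>0$, so some neighbour has a smaller value, a contradiction. So the minimum is attained at $u$.
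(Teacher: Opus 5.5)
Your proof is correct and follows the paper's argument. Parts 1 and 2 use the same zero-extension and divergence-summation-over-$V(F)$ arguments, and part 3 uses the same superposition of the two unit flows together with the maximum principle. The difference in part 3 is only bookkeeping. The paper adds the potentials, treats $f+g$ as the electrical potential for the pair $v,w$, and applies the maximum principle to both $f$ and $g$. You instead expand the energy of $j_1+j_2$ and evaluate the cross term via the summation identity, so you need only that $j_1+j_2$ is some unit $w$--$v$ flow, not the minimizer, and only one application of the maximum principle, which you also prove.
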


\begin{proof}
For (\ref{item:resistance-subgraph}), every unit $w$--$v$ flow on $K$ extends by zero to a unit flow on $H$ with the same energy.
Taking minima gives the inequality.

For (\ref{item:resistance-attachment}), summing the divergences of any unit $w$--$v$ flow on $K$ over $V(F)$ gives zero current on $ab$.
Its restriction to $H$ is therefore a unit flow with no greater energy, so $R_H(v,w)\le R_K(v,w)$.
Conversely, a minimizing flow on $H$ extends by zero over $F$ and $ab$, giving $R_K(v,w)\le R_H(v,w)$.

For (3), let $f$ and $g$ be the electrical potentials of unit flows from $v$ to $u$ and from $u$ to $w$,
respectively. Their sum is the electrical potential of a unit flow from $v$ to $w$. The maximum principle gives
$f_w\ge f_u$ and $g_v\le g_u$, and hence
\[
R_H(v,w)=(f_v-f_w)+(g_v-g_w)\le(f_v-f_u)+(g_u-g_w)=R_H(v,u)+R_H(u,w).
\]
\end{proof}

\begin{samepage}
\begin{defi}\label{def:associated-network}
Let $G$ be a connected nonregular graph with maximum degree $\Delta$.
Retain all vertices and edges of $G$, adjoin a new vertex $v_0$,
and join each $v\in V(G)$ to $v_0$ by $\Delta-d_G(v)$ parallel edges.
We call the resulting multigraph $H$ the \emph{electrical network associated with $G$}.
Every real function $y:V(G)\to\mathbb R$ is extended to $V(H)$ by retaining its values on $V(G)$ and setting $y_{v_0}=0$.
Under this extension convention, $v_0$ is called the \emph{zero-potential vertex} of $H$.
\end{defi}
\end{samepage}

Since $G$ is connected and nonregular, $H$ is connected, and each vertex in $V(G)$ has degree $\Delta$ in $H$.

 By Lemma \ref{lem:dirichlet-resistance} and the Perron energy identity used by Liu \cite{LLL}, we have the following.

\begin{lem}\label{lem:laplacian-energy}
Let $G$ be a connected nonregular graph with maximum degree $\Delta$,
and construct $H$ from $G$ as in Definition \ref{def:associated-network}.
Let $x$ be a positive Perron vector of $G$ normalized by $\sum_{v\in V(G)}x_v^2=n$,
and extend $x$ to $H$ by setting $x_{v_0}=0$. Then
\begin{equation}\label{eq:grounded-perron-energy}
\mathcal E_H(x)=n\bigl(\Delta-\rho(G)\bigr).
\end{equation}
If $w\in V(G)$ satisfies $x_w=\max_{v\in V(G)}x_v$, then
\begin{equation}\label{eq:resistance-gap-lower-bound}
\Delta-\rho(G)\ge\frac{x_w^2}{nR(v_0,w)}\ge\frac{1}{nR(v_0,w)}.
\end{equation}
\end{lem}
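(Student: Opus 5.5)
The plan is to compute $\mathcal E_H(x)$ directly from the definition of $H$ and the Perron equation, and then feed the result into Lemma \ref{lem:dirichlet-resistance}.

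\textbf{Step 1: the energy identity.} The edges of $H$ are of two kinds: the original edges of $G$, and, for each $v\in V(G)$, the $\Delta-d_G(v)$ parallel edges $vv_0$. Since $x_{v_0}=0$, each edge $vv_0$ contributes $x_v^2$. Hence
\[
\mathcal E_H(x)=\sum_{uv\in E(G)}(x_u-x_v)^2+\sum_{v\in V(G)}\bigl(\Delta-d_G(v)\bigr)x_v^2 .
\]
Expanding the first sum gives
\[
\sum_{uv\in E(G)}(x_u-x_v)^2=\sum_{v}d_G(v)x_v^2-2\sum_{uv\in E(G)}x_ux_v=\sum_v d_G(v)x_v^2-x^{\mathsf T}A(G)x .
\]
Adding the second sum, the $d_G(v)$ terms cancel. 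What remains is $\Delta\sum_v x_v^2-x^{\mathsf T}A(G)x$. Using $A(G)x=\rho(G)x$ and the normalization $\sum_v x_v^2=n$, this equals $\Delta n-\rho(G)n$, which is \eqref{eq:grounded-perron-energy}.

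\textbf{Step 2: the resistance bound.} Apply inequality \eqref{eq:resistance-energy-comparison} of Lemma \ref{lem:dirichlet-resistance} in $H$, with terminals $v=v_0$ and $w$, to the potential $y=x$ extended by $x_{v_0}=0$. Since $H$ is connected, the lemma applies and gives
\[
x_w^2=(x_w-x_{v_0})^2\le R(v_0,w)\,\mathcal E_H(x)=R(v_0,w)\,n\bigl(\Delta-\rho(G)\bigr).
\]
The resistance $R(v_0,w)$ is positive, so dividing through gives the first inequality in \eqref{eq:resistance-gap-lower-bound}.

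\textbf{Step 3: the final inequality.} Because $x_w$ is the largest entry of $x$, we have $n x_w^2\ge\sum_v x_v^2=n$, so $x_w^2\ge1$. This yields the second inequality in \eqref{eq:resistance-gap-lower-bound}.

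No step is a real obstacle. The only point needing care is the bookkeeping in Step 1: the parallel edges to $v_0$ must be counted with multiplicity, exactly as the energy convention for multigraphs prescribes.
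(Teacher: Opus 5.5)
Your proposal is correct and follows essentially the same route as the paper. You expand $\mathcal E_G(x)$ via $x^{\mathsf T}A(G)x$ where the paper writes $\mathcal E_G(x)=\sum_v(d_G(v)-\rho(G))x_v^2$, which is the same computation. Both arguments then apply \eqref{eq:resistance-energy-comparison} with terminals $v_0,w$ and use $x_w^2\ge1$.
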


\begin{proof}
By the Perron equation on $G$, we have $\mathcal E_G(x)=\sum_{v\in V(G)}(d_G(v)-\rho(G))x_v^2$. Hence
\[
(\Delta-\rho(G))\sum_{v\in V(G)}x_v^2=\mathcal E_G(x)+\sum_{v\in V(G)}(\Delta-d_G(v))x_v^2.
\]
For each $v\in V(G)$, the added edges from $v$ to $v_0$ contribute
$(\Delta-d_G(v))(x_v-x_{v_0})^2=(\Delta-d_G(v))x_v^2$ to the energy. Therefore, by the normalization of $x$,
\[
\mathcal E_H(x)=\mathcal E_G(x)+\sum_{v\in V(G)}(\Delta-d_G(v))x_v^2=n\bigl(\Delta-\rho(G)\bigr).
\]
By \eqref{eq:resistance-energy-comparison} and \eqref{eq:grounded-perron-energy},
\[
x_w^2\le R(v_0,w)\mathcal E_H(x)=nR(v_0,w)\bigl(\Delta-\rho(G)\bigr).
\]
Since $x_w^2\ge n^{-1}\sum_{v\in V(G)}x_v^2=1$, the result follows.
\end{proof}

\section{Resistance estimates}

Throughout this section, $\Delta\ge5$ is an odd integer.
Lemma \ref{lem:laplacian-energy} shows how upper bounds on effective resistance give lower bounds on the spectral gap.
Our aim is therefore to bound effective resistance in terms of the number of vertices under the degree constraints.
We begin with the following estimate for simple graphs, in which the coefficient of the graph order is $1/(\Delta-1)$.

\begin{samepage}
\begin{theo}\label{thm:simple-resistance}
Let $G$ be a connected simple graph of order $n$ with distinct vertices $s,t$.
Suppose that $d_G(s),d_G(t)\le\Delta-1$ and $d_G(v)=\Delta$ for all $v\in V(G)\setminus\{s,t\}$. Then
\begin{equation}\label{eq:simple-resistance}
(\Delta-1)R(s,t)\le n+\Delta-1-d_G(s)-d_G(t).
\end{equation}
\end{theo}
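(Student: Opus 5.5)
The plan is to bound $R(s,t)$ from above by exhibiting explicit unit $t$--$s$ flows, so that Definition \ref{def:effective-resistance} applies directly, and to combine such flows through the series law of Lemma \ref{lem:series-parallel} along a chain of small edge cuts. I would argue by strong induction on $n$. The basic gadget is the following estimate. If $P_1,\dots,P_k$ are pairwise edge-disjoint $s$--$t$ paths of lengths $\ell_1,\dots,\ell_k$, then sending $1/k$ along each gives
\[
R(s,t)\le \frac1{k^2}\sum_{i=1}^k\ell_i .
\]
Since every vertex other than $s,t$ has odd degree $\Delta$, it lies on at most $(\Delta-1)/2$ of the paths. Hence $\sum_i\ell_i\le k+(n-2)(\Delta-1)/2$. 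This already gives the desired bound whenever $k$ is close to $\Delta-1$, and in fact with room to spare (roughly a factor $2$ in the main term). The difficulty is therefore only when the local edge-connectivity $\lambda(s,t)$ is small.

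To handle small connectivity, I would use parity and counting on cuts. Let $S\ni s$, $t\notin S$, and $c=e(S,\bar S)$. Then
\[
c=\Delta|S|-(\Delta-d_G(s))-2e(G[S]),
\]
and simplicity gives $e(G[S])\le\binom{|S|}2$. Together these force a small cut to have both sides large: roughly $|S|\ge \Delta+1-c$, and the same holds for $\bar S$. Oddness of $\Delta$ fixes the parity of $c$ relative to $|S|$ and $d_G(s)$. Now take a minimum $s$--$t$ cut $S$ with $c<\Delta-1$ (if $c\ge\Delta-1$, or more precisely $c\ge\min\{d_G(s),d_G(t)\}$ with a suitable correction, the path gadget finishes). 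Let $c=\lambda(s,t)$, and let the cut edges be $a_ib_i$ with $a_i\in S$. I would contract $\bar S$ to a single vertex $t'$ to get $G_1$, and contract $S$ to a single vertex $s'$ to get $G_2$. After removing any parallel edges, $G_1$ and $G_2$ satisfy the hypotheses with $d(t')=c\le\Delta-1$ and $d(s')=c$.

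Next I want to glue the two pieces. The minimum cut carries a flow of value $1$ split as $1/c$ per cut edge when the cut is realized by $c$ edge-disjoint paths (Menger). Combining the optimal flows in $G_1$ and $G_2$ with the cut edges in series then gives
\[
R_G(s,t)\le R_{G_1}(s,t')+R_{G_2}(s',t)+\text{(cut-edge term)} .
\]
Here contraction only decreases resistance, so I must instead route the flow through the pieces using the paths. Applying the induction hypothesis to each piece, and using $|V(G_1)|+|V(G_2)|=n+2$, the additive constants $\Delta-1-d-d$ telescope to $n+\Delta-1-d_G(s)-d_G(t)$ exactly when the degree $c$ of the contracted terminal absorbs the $+2$. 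This is consistent with \eqref{eq:simple-resistance} being tight at single edges and series chains.

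The main obstacle is the gluing step. Contracting a side lowers resistance, which is the wrong direction, so an honest upper bound needs the flow in $G$ to distribute its current over the $c$ cut edges compatibly with the flows in both pieces. Simplicity is also lost after contraction, since parallel edges to $t'$ can appear. I would first try to choose the flow in each piece to enter the contracted vertex with exactly $1/c$ on each cut edge. To do this I would construct the pieces' flows themselves from edge-disjoint path systems, using the $\le(\Delta-1)/2$ vertex-load bound, rather than from the induction hypothesis as a black box. The spare factor in the gadget estimate should pay for the imbalance. Then the induction would be replaced by a direct decomposition along a maximal nested chain of minimum cuts (Gomory--Hu style), with the path gadget applied block by block.
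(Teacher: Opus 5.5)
\textbf{There is a genuine gap.} Nothing in the proposal reaches the exact coefficient $1/(\Delta-1)$ when the $s$--$t$ edge connectivity is small, and that is exactly where the theorem lives. It is sharp there: for a chain of copies of $K_{\Delta+1}$ minus an edge joined by bridges, equality holds, so no step may lose anything.

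Your path gadget with the vertex-load estimate gives $R\le 1/k+(n-2)(\Delta-1)/(2k^2)$, i.e.\ a per-vertex coefficient $(\Delta-1)/(2k^2)$. This is at most $1/(\Delta-1)$ only when $k\ge(\Delta-1)/\sqrt2$. Used block by block with $k=\lambda$ small, it is off by a factor of order $(\Delta-1)^2/\lambda^2$, so no ``spare factor'' is left to pay for current imbalances.

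The inductive gluing also fails concretely, in three ways.
\begin{enumerate}
\item The pieces $G_1,G_2$ have parallel edges at the contracted terminal. Deleting them breaks $d=\Delta$ at the other endpoints. Keeping them does not help either, because the sharp inequality is false for such multigraphs: if $s,a$ are joined by one edge and $a,t$ by $\Delta-1$ parallel edges, then $(\Delta-1)R(s,t)=\Delta$ while the right-hand side equals $2$. This is why the paper's multigraph version, Theorem~\ref{thm:terminal-resistance}, only holds with the additive slack $(\Delta+1)^2$.
\item Take the idealized case where both optimal flows put $1/c$ on every cut edge, so gluing is free and the doubly counted cut energy $1/c$ may be subtracted. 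Even then, the two inductive bounds on $(\Delta-1)R$ add up to the target plus $\Delta+1-2c$, while the subtraction recovers only $(\Delta-1)/c$. Since $(\Delta-1)/c-(\Delta+1-2c)=(c-1)(2c-\Delta+1)/c$, the bookkeeping fails for $2\le c\le(\Delta-3)/2$.
\item In general the two optimal flows distribute current differently over the cut. Reconciling them costs the resistance between cut endpoints inside a piece, which can be comparable to the size of that piece.
\end{enumerate}
The closing Gomory--Hu paragraph is a program rather than an argument.

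The idea you are missing is to avoid cuts altogether and work along a single shortest $s$--$t$ path $P=u_0\cdots u_\ell$. Shortestness makes $P$ induced and confines the path neighbours of each off-path vertex $v$ to three consecutive path vertices. Write $k_v$ for the number of path neighbours of $v$ and $\sigma=\sum_{v\notin V(P)}(k_v-1)$. Counting degrees along $P$ gives the exact identity $(\Delta-1)\ell-\sigma=n+\Delta-1-d_G(s)-d_G(t)$, so it suffices to prove $R(s,t)\le\ell-\sigma/(\Delta-1)$.

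The paper obtains this from one series chain of small parallel gadgets along $P$: single edges with $h_r$ two-edge detours, and two-edge stretches with $m_r$ detours. Each gadget saves at least $1/(\Delta-1)$ per unit of $k_v-1$, essentially because a path vertex has at most $\Delta-2$ off-path neighbours. The only deficient gadget is an edge whose endpoints share exactly $\Delta-2$ off-path neighbours; it has resistance $2/\Delta$ instead of the required $1/(\Delta-1)$. That deficit is repaid by the slack from vertices with $k_v=0$, or by the explicit bypasses of neighbouring path edges in Claim~\ref{clm:r9-bypass-saving}. Because everything is one series chain, no reconciliation of current distributions across cuts is ever needed.
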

\end{samepage}

\begin{proof}
All degrees refer to the original graph $G$.
We bound its terminal resistance using subgraphs and equivalent circuits.

Choose a shortest $s$--$t$ path $P=u_0u_1\cdots u_\ell$, with $u_0=s$ and $u_\ell=t$.
Write $Q:=V(G)\setminus V(P)$, $q:=|Q|$, and $k_v:=|N_G(v)\cap V(P)|$ for $v\in Q$.
The path is induced. Moreover, if $u_i,u_j\in N_G(v)$, then $|i-j|\le2$,
since otherwise the two-edge route through $v$ would shorten $P$. In particular, $k_v\le3$.

Define the following sets and counts for $1\le r\le\ell$:
\[
\begin{alignedat}{2}
A_r&:=\{v\in Q:N_G(v)\cap V(P)=\{u_{r-1},u_r\}\},\quad &a_r&:=|A_r|,\\
B_r&:=\{v\in Q:N_G(v)\cap V(P)=\{u_{r-1},u_{r+1}\}\},\quad &b_r&:=|B_r|,\\
C_r&:=\{v\in Q:N_G(v)\cap V(P)=\{u_{r-1},u_r,u_{r+1}\}\},\quad &c_r&:=|C_r|.
\end{alignedat}
\]
Set $a_r,b_r,c_r$ to zero outside their respective ranges, and put
\[
\sigma:=\sum_{v\in Q}(k_v-1).
\]
Each internal vertex of $P$ has degree  $\Delta$, so
\[
\sum_{v\in Q}k_v=(\Delta-2)(\ell-1)+d_G(s)+d_G(t)-2.
\]
Since $n=\ell+1+q$, this gives the exact identity
\begin{equation}\label{eq:r9-path-count}
(\Delta-1)\ell-\sigma=n+\Delta-1-d_G(s)-d_G(t).
\end{equation}
By \eqref{eq:r9-path-count}, it suffices to prove $R_G(s,t)\le\ell-\sigma/(\Delta-1)$, since this gives
\[
(\Delta-1)R(s,t)\le(\Delta-1)\ell-\sigma=n+\Delta-1-d_G(s)-d_G(t),
\]
which is \eqref{eq:simple-resistance}.

\cla\label{clm:r9-series} For each $W\subseteq\{v\in Q:k_v\ge2\}$ satisfying $|A_r\cap W|\le\Delta-3$ for each $r\in[\ell]$,
there is a subnetwork $G'\subseteq G$ containing $P$ such that
\[
R_G(s,t)\le R_{G'}(s,t)\le\ell-\frac{\sum_{v\in W}(k_v-1)}{\Delta-1}.
\]

\begin{proof}[Proof of Claim~\ref{clm:r9-series}]
We construct a subnetwork $G'\subseteq G$ containing $P$ by retaining selected vertices in $W$
and specified edges joining them to $P$.
By resistance monotonicity, $R_G(s,t)\le R_{G'}(s,t)$.
Interpret $A_r,B_r,C_r$ as empty outside their respective ranges, and put
\[
h_r:=|(A_r\cup C_{r-1}\cup C_r)\cap W|,\qquad m_r:=|(B_r\cup C_r)\cap W|,\qquad r\in\mathbb Z.
\]
For $r\in[\ell]$, $h_r$ counts the common neighbors of $u_{r-1},u_r$ in $W$, so $h_r\le\Delta-2$.
Choose an inclusion-maximal set $\mathcal I\subseteq\{r\in[\ell-1]:m_r>0\}$ with no two consecutive indices, and put
\[
\mathcal J:=[\ell]\setminus\bigl(\mathcal I\cup\{r+1:r\in\mathcal I\}\bigr).
\]
The path is thereby partitioned into two-edge intervals indexed by $\mathcal I$ and one-edge intervals indexed by $\mathcal J$.
Every index $r$ with $m_r>0$ belongs to $\mathcal I$ or is adjacent to an index in $\mathcal I$.

Define the subgraph $G'\subseteq G$ by
\[
\begin{aligned}
V(G')&:=V(P)\cup\{v:r\in\mathcal I,\ v\in(B_r\cup C_r)\cap W\}\\
&\hspace{19mm}\cup\{v:r\in\mathcal J,\ v\in(A_r\cup C_{r-1}\cup C_r)\cap W\},\\
E(G')&:=E(P)\cup\{u_jv:r\in\mathcal I,\ v\in(B_r\cup C_r)\cap W,\ |j-r|=1\}\\
&\hspace{19mm}\cup\{u_jv:r\in\mathcal J,\ v\in(A_r\cup C_{r-1}\cup C_r)\cap W,\ j=r-1\text{ or }j=r\}.
\end{aligned}
\]
For each $r\in\mathcal I$, define the subnetwork $G_r$ as the union of the $m_r+1$ two-edge paths
\[
u_{r-1}wu_{r+1},\qquad w\in\{u_r\}\cup((B_r\cup C_r)\cap W) .
\]
\begin{samepage}
For each $r\in\mathcal J$, define the subnetwork $G_r$ as the union of the edge $u_{r-1}u_r$ and the $h_r$ two-edge paths
\[
u_{r-1}wu_r,\qquad w\in(A_r\cup C_{r-1}\cup C_r)\cap W .
\]
\end{samepage}
The union of the subnetworks $G_r$ over $r\in\mathcal I\cup\mathcal J$ is $G'$.
\begin{samepage}
Vertices in $A_r$ or $B_r$ occur in at most one of these subnetworks.
If $C_r\cap W\ne\varnothing$, then $m_r>0$, so maximality of $\mathcal I$ gives
\[
\mathcal I\cap\{r-1,r,r+1\}\ne\varnothing.
\]
\end{samepage}
Together with the definition of $\mathcal J$, this yields
\[
|\mathcal I\cap\{r\}|+|\mathcal J\cap\{r,r+1\}|\le1.
\]
The left side counts the subnetworks using vertices of $C_r\cap W$.
Thus every vertex of $Q$ belongs to at most one of these subnetworks.
Since the path intervals partition $P$, consecutive subnetworks intersect exactly in their common terminal,
and nonconsecutive subnetworks are vertex-disjoint.
Their terminal resistances therefore add by the series law.

\begin{figure}[H]
\centering
\begin{tikzpicture}[
    font=\small,
    line cap=round,
    line join=round,
    vertex/.style={circle,fill=black,inner sep=1.55pt,outer sep=0pt},
    path/.style={line width=.8pt},
    branch/.style={line width=.65pt},
    block/.style={draw=black!48,fill=black!2,rounded corners=4pt,line width=.55pt}
]
\coordinate (figs) at (0,0);
\coordinate (figurm) at (2.4,0);
\coordinate (figur) at (3.65,0);
\coordinate (figurp) at (4.9,0);
\coordinate (figurpm) at (8.15,0);
\coordinate (figurpzero) at (10.05,0);
\coordinate (figt) at (12.5,0);

\draw[block] (2.15,-.72) rectangle (5.15,2.75);
\draw[block] (7.9,-.72) rectangle (10.3,2.75);
\draw[path] (figs)--(figurm)--(figur)--(figurp)--(figurpm)--(figurpzero)--(figt);

\node[above=7pt] at (.55,0) {$P$};
\node[fill=white,inner sep=2pt] at (1.18,0) {$\cdots$};
\node[fill=white,inner sep=2pt] at (6.52,0) {$\cdots$};
\node[fill=white,inner sep=2pt] at (11.3,0) {$\cdots$};

\coordinate (figwone) at (3.65,2.2);
\coordinate (figwlast) at (3.65,1.05);
\draw[branch] (figurm)--(figwone)--(figurp);
\draw[branch] (figurm)--(figwlast)--(figurp);

\coordinate (figzone) at (9.1,2.2);
\coordinate (figzlast) at (9.1,1.05);
\draw[branch] (figurpm)--(figzone)--(figurpzero);
\draw[branch] (figurpm)--(figzlast)--(figurpzero);

\foreach \x in {figs,figurm,figur,figurp,figurpm,figurpzero,figt,figwone,figwlast,figzone,figzlast}
    \node[vertex] at (\x) {};

\node[below=5pt] at (figs) {$s=u_0$};
\node[below=5pt] at (figurm) {$u_{r-1}$};
\node[below=5pt] at (figur) {$u_r$};
\node[below=5pt] at (figurp) {$u_{r+1}$};
\node[below=5pt] at (figurpm) {$u_{r'-1}$};
\node[below=5pt] at (figurpzero) {$u_{r'}$};
\node[below=5pt] at (figt) {$t=u_\ell$};

\node[right=4pt] at (figwone) {$w_1$};
\node[above=3pt,font=\scriptsize] at (figwlast) {$w_{m_r}$};
\node[font=\scriptsize] at (3.65,1.85) {$\vdots$};
\node[right=4pt] at (figzone) {$z_1$};
\node[above=3pt,font=\scriptsize] at (figzlast) {$z_{h_{r'}}$};
\node[font=\scriptsize] at (9.1,1.85) {$\vdots$};

\node[font=\footnotesize] at (3.65,-1.08) {$G_r\ (r\in\mathcal I)$};
\node[font=\footnotesize] at (9.1,-1.08) {$G_{r'}\ (r'\in\mathcal J)$};
\end{tikzpicture}
\caption{$G'$.}
\label{fig:r9-series-network}
\end{figure}

For $r\in\mathcal J$, we have $h_r\le\Delta-3$.
Since $d_G(s),d_G(t)\le\Delta-1$ and $d_G(v)=\Delta$ for $v\in V(G)\setminus\{s,t\}$,
each vertex of $P$ has at most $\Delta-2$ neighbors in $Q$. If $h_r=\Delta-2$, then
\[
N_G(u_{r-1})\cap Q=N_G(u_r)\cap Q=(A_r\cup C_{r-1}\cup C_r)\cap W.
\]
Vertices in $B_{r-2}\cup C_{r-2}$ or $B_{r+1}\cup C_{r+1}$ are adjacent in $G$ to exactly one of $u_{r-1},u_r$,
so both sets are empty and $m_{r-2}=m_{r+1}=0$.
By the assumption $|A_r\cap W|\le\Delta-3$,
\[
m_{r-1}+m_r\ge h_r-|A_r\cap W|\ge1.
\]
Since $r\in\mathcal J$ and $m_{r-2}=m_{r+1}=0$, we have
\[
\mathcal I\cap\{r-2,r-1,r,r+1\}=\varnothing.
\]
Thus at least one of $r-1,r$ can be added to $\mathcal I$ without introducing consecutive indices, contradicting its maximality.

\begin{samepage}
For $r\in\mathcal J$, we have $R_{G_r}(u_{r-1},u_r)=2/(h_r+2)$ by Lemma \ref{lem:series-parallel},
and $h_r\le\Delta-3$ gives
\begin{equation}\label{eq:r9-single-resistance}
\frac{2}{h_r+2}\le1-\frac{h_r}{\Delta-1}.
\end{equation}
\end{samepage}
\begin{samepage}
For $r\in\mathcal I$, we have $R_{G_r}(u_{r-1},u_{r+1})=2/(m_r+1)$, where $1\le m_r\le\Delta-2$.
Counting the neighbors of $u_r$ in $W$ gives
\[
h_r+h_{r+1}+\sum_{j=r-1}^{r+1}|B_j\cap W|\le\Delta-2+m_r.
\]
Since $(m_r-1)(\Delta-2-m_r)\ge0$, it follows that
\begin{equation}\label{eq:r9-pair-resistance}
\frac{2}{m_r+1}\le2-\frac{\Delta-2+m_r}{\Delta-1}\le2-\frac{h_r+h_{r+1}+\sum_{j=r-1}^{r+1}|B_j\cap W|}{\Delta-1}.
\end{equation}
\end{samepage}
Every $h_r$ is counted exactly once in these interval bounds.
Each positive term $|B_r\cap W|$ is counted at least once, since $m_r>0$ and some index in $\{r-1,r,r+1\}$ belongs to $\mathcal I$.

By Lemmas \ref{lem:series-parallel} and \ref{lem:resistance-operations},
\begin{equation}\label{eq:r9-series-resistance}
\begin{aligned}
R_G(s,t)&\le R_{G'}(s,t)=\sum_{r\in\mathcal J}\frac{2}{h_r+2}+\sum_{r\in\mathcal I}\frac{2}{m_r+1}\\
&\le\ell-\frac{\sum_{r=1}^{\ell}h_r+\sum_{r=1}^{\ell-1}|B_r\cap W|}{\Delta-1}=\ell-\frac{\sum_{v\in W}(k_v-1)}{\Delta-1}.
\end{aligned}
\end{equation}

\end{proof}

Put $\mathcal R:=\{r\in[\ell]:a_r=\Delta-2\}$, and define the complementary subsets $W,S$ of $\{v\in Q:k_v\ge2\}$ by
\[
\begin{aligned}
W&:=\{v\in Q:k_v\ge2,\ v\notin A_r\text{ whenever }a_r=\Delta-2\},\\
S&:=\{v\in Q:k_v\ge2\}\setminus W.
\end{aligned}
\]
Distinct sets $A_r$ prescribe distinct neighbor sets on $P$ and are therefore pairwise disjoint.
Also, $k_v=2$ for each $v\in A_r$. Hence
\[
S=\bigsqcup_{r\in\mathcal R}A_r,\qquad
|S|=\sum_{r\in\mathcal R}|A_r|=(\Delta-2)|\mathcal R|.
\]
In particular, $|S|/(\Delta-2)=|\mathcal R|$ is an integer.
Put $n_0:=|\{v\in Q:k_v=0\}|$. It follows that $\sum_{v\in W}(k_v-1)=\sigma-|S|+n_0$.
\par

\begin{samepage}
Since $a_r\le\Delta-2$, we have $A_r\cap W=\varnothing$ when $a_r=\Delta-2$ and $|A_r\cap W|\le a_r\le\Delta-3$ otherwise.
For this $W$, fix $G'$ together with the counts $h_r,m_r$, index sets $\mathcal I,\mathcal J$,
and subnetworks $G_r$ as constructed in the proof of Claim~\ref{clm:r9-series}.
\par
\end{samepage}

\begin{samepage}
For each $r$ with $a_r=\Delta-2$, the set $A_r$ has $\Delta-2$ vertices and each of $u_{r-1},u_r$ has at most $\Delta-2$ neighbors in $Q$, so
\[
N_G(u_{r-1})\cap Q=N_G(u_r)\cap Q=A_r.
\]
\end{samepage}
This equality implies $a_{r-1}=a_{r+1}=0$, so no two indices $r$ satisfying $a_r=\Delta-2$ are consecutive.
Since $A_r\cap W=\varnothing$, neither $u_{r-1}$ nor $u_r$ has a neighbor in $W$. Therefore
\[
h_{r-1}=h_r=h_{r+1}=0,\qquad m_j=0,\quad r-2\le j\le r+1.
\]
By the definitions of $\mathcal I$ and $\mathcal J$, each $j\in\{r-1,r,r+1\}\cap[\ell]$ belongs to $\mathcal J$.
For each such $j$, the subnetwork $G_j$ consists only of the edge $u_{j-1}u_j$.

For each $r$ satisfying $a_r=\Delta-2$, let $G_r'$ be the union of the single-edge network $G_r$ and the $\Delta-2$ two-edge paths
\[
u_{r-1}vu_r,\qquad v\in A_r.
\]
By Lemma \ref{lem:series-parallel},
\[
R_{G_r'}(u_{r-1},u_r)=\left(1+\frac{\Delta-2}{2}\right)^{-1}=\frac2\Delta.
\]
Set $G_+:=G'\cup\bigcup_{r\in\mathcal R}G_r'$. If $S=\varnothing$, then $\mathcal R=\varnothing$ and $G_+=G'$.

The network $G'$ is the series connection of $G_j$ ($j\in\mathcal I\cup\mathcal J$), in increasing index order along $P$.
Each $G_j'$ has the same terminals as $G_j$, and its added vertices are precisely $A_j$.
These sets are pairwise disjoint and lie outside $V(G')$; hence $G_+$ is the series connection, in the same order, of
\[
G_j\quad(j\in\mathcal I\cup\mathcal J,\ a_j\le\Delta-3),\qquad G_j',\quad j\in\mathcal J,\ a_j=\Delta-2.
\]
For each $r$ satisfying $a_r=\Delta-2$, the neighboring single-edge networks $G_{r-1}$ and $G_{r+1}$, when present, belong to the first family.
Thus their edges $u_{r-2}u_{r-1}$ and $u_ru_{r+1}$ are bridges of $G_+$,
and the effective resistance in $G_+$ between the endpoints of each is $1$.

\begin{figure}[H]
\centering
\begin{tikzpicture}[
    font=\small,
    line cap=round,
    line join=round,
    vertex/.style={circle,fill=black,inner sep=1.5pt,outer sep=0pt},
    path/.style={line width=.8pt},
    retained/.style={line width=1.25pt},
    branch/.style={line width=.65pt},
    added/.style={draw=black!68,fill=black!5,rounded corners=5pt,line width=.75pt}
]
\coordinate (figpluss) at (0,0);
\coordinate (figplusjm) at (4.35,0);
\coordinate (figplusj) at (6.15,0);
\coordinate (figplust) at (10.5,0);

\draw[added] (4.1,-.72) rectangle (6.4,2.95);
\draw[path] (figpluss)--(figplusjm)--(figplusj)--(figplust);
\draw[retained] (figplusjm)--(figplusj);

\node[above=7pt] at (.55,0) {$P$};
\node[fill=white,inner sep=2pt] at (2.25,0) {$\cdots$};
\node[fill=white,inner sep=2pt] at (8.25,0) {$\cdots$};

\coordinate (figplusvone) at (5.25,2.4);
\coordinate (figplusvlast) at (5.25,1.05);
\draw[branch] (figplusjm)--(figplusvone)--(figplusj);
\draw[branch] (figplusjm)--(figplusvlast)--(figplusj);

\foreach \x in {figpluss,figplusjm,figplusj,figplust,figplusvone,figplusvlast}
    \node[vertex] at (\x) {};

\node[below=5pt] at (figpluss) {$s=u_0$};
\node[below=5pt] at (figplusjm) {$u_{j-1}$};
\node[below=5pt] at (figplusj) {$u_j$};
\node[below=5pt] at (figplust) {$t=u_\ell$};

\node[right=3pt] at (figplusvone) {$v_1$};
\node[above=3pt,font=\scriptsize] at (figplusvlast) {$v_{\Delta-2}$};
\node[font=\scriptsize] at (5.25,1.98) {$\vdots$};

\node[above=4pt] at (5.25,0) {$G_j$};
\node[font=\footnotesize] at (5.25,-1.08) {$G_j'\ (a_j=\Delta-2)$};
\end{tikzpicture}
\caption{$G_+$.}
\label{fig:r9-equality-network}
\end{figure}

\begin{samepage}
Using \eqref{eq:r9-series-resistance} and the identity for $\sum_{v\in W}(k_v-1)$, we obtain
\begin{equation}\label{eq:r9-equality-base}
\begin{aligned}
R_{G_+}(s,t)&=R_{G'}(s,t)-\frac{|S|}{\Delta}\\
&\le\ell-\frac{\sigma}{\Delta-1}+\frac{|S|}{\Delta(\Delta-1)}-\frac{n_0}{\Delta-1}.
\end{aligned}
\end{equation}
\end{samepage}

If $|\mathcal R|\le n_0$, then
\[
\frac{|S|}{\Delta(\Delta-1)}
=\frac{(\Delta-2)|\mathcal R|}{\Delta(\Delta-1)}
\le\frac{n_0}{\Delta-1},
\]
so \eqref{eq:r9-equality-base} gives the required bound. We may therefore assume that $|\mathcal R|>n_0$.

\begin{samepage}
\cla\label{clm:r9-bypass-saving} If $|\mathcal R|>n_0$, then
\[
R_G(s,t)\le R_{G_+}(s,t)-\frac{\Delta-2}{\Delta(\Delta-1)}
\left(|\mathcal R|-n_0\right).
\]
\end{samepage}

\begin{proof}[Proof of Claim~\ref{clm:r9-bypass-saving}]
We first compute, for later use, the resistance in $G_r'$ from $v\in A_r$ to either terminal. Temporarily denote by
$G_r'-v$ the network obtained from $G_r'$ by deleting $v$ and its incident edges. Between $u_{r-1}$ and $u_r$,
this auxiliary network is the parallel connection of the edge $u_{r-1}u_r$ and $\Delta-3$ two-edge paths. Hence
\[
R_{G_r'-v}(u_{r-1},u_r)=\left(1+\frac{\Delta-3}{2}\right)^{-1}=\frac{2}{\Delta-1}.
\]
In $G_r'$, the edge $vu_{r-1}$ is in parallel with the series connection of $vu_r$ and $G_r'-v$. By
Lemma \ref{lem:series-parallel},
\[
\begin{aligned}
R_{G_r'}(v,u_{r-1})
&=\left(1+\frac1{1+R_{G_r'-v}(u_r,u_{r-1})}\right)^{-1}\\
&=\left(1+\frac1{1+2/(\Delta-1)}\right)^{-1}=\frac{\Delta+1}{2\Delta}.
\end{aligned}
\]
By symmetry, $R_{G_r'}(v,u_r)=(\Delta+1)/(2\Delta)$ as well.

Whenever a construction below admits more than one admissible choice, fix one such choice and retain it throughout
the proof. Accordingly, $x_r$ and $e_r$, whenever introduced, denote one fixed vertex and one fixed edge,
respectively; no uniqueness among the admissible candidates is asserted.

For each fixed $r\in\mathcal R$, exactly one of the following two cases holds.

\medskip
\noindent\textit{Case 1.} There exists
$
 r'\in\mathcal R\setminus\{r\}$ such that $E(G)$ contains an edge $vw$ with $v\in A_r,\ w\in A_{r'}$.
Fix one such triple $(r',v,w)$. By symmetry, it suffices to consider $r<r'$. Since the indices in $\mathcal R$ are
nonconsecutive, $r'-r\ge2$. The route $u_{r-1}vwu_{r'}$ has length $3$, so the shortestness of $P$ gives
$r'-r+1\le3$, and hence $r'=r+2$. Set $e_r:=u_ru_{r+1}$. The subnetwork
$G_r'\cup G_{r+2}'\cup\{vw\}$ of $G-e_r$ connects $u_r$ and $u_{r+1}$.
By Lemma \ref{lem:series-parallel}, its resistance is
\[
R_{G_r'}(u_r,v)+1+R_{G_{r+2}'}(w,u_{r+1})=2\left(\frac{\Delta+1}{2\Delta}\right)+1=2+\frac1\Delta.
\]

\begin{figure}[H]
\centering
\begin{tikzpicture}[font=\small,line cap=round,line join=round]
\coordinate (um) at (0,0);
\coordinate (u) at (3.2,0);
\coordinate (up) at (6.2,0);
\coordinate (upp) at (9.4,0);
\coordinate (v) at (1.6,1.2);
\coordinate (zone) at (1.6,2.15);
\coordinate (zlast) at (1.6,3.15);
\coordinate (vp) at (7.8,1.2);
\coordinate (yone) at (7.8,2.15);
\coordinate (ylast) at (7.8,3.15);

\node[rblockbox,fit=(um) (u) (zlast),inner xsep=10pt,inner ysep=10pt] (leftblock) {};
\node[rblockbox,fit=(up) (upp) (ylast),inner xsep=10pt,inner ysep=10pt] (rightblock) {};
\node[anchor=north west,font=\footnotesize] at ([xshift=3pt,yshift=-3pt]leftblock.north west) {$G_r'$};
\node[anchor=north east,font=\footnotesize] at ([xshift=-3pt,yshift=-3pt]rightblock.north east) {$G_{r+2}'$};

\draw[rpath] (um)--(u);
\draw[rextra] (u)--(up);
\draw[rpath] (up)--(upp);
\draw[rchosenedge] (um)--(v)--(u);
\draw[rlocaledge] (um)--(zone)--(u);
\draw[rlocaledge] (um)--(zlast)--(u);
\draw[rchosenedge] (up)--(vp)--(upp);
\draw[rlocaledge] (up)--(yone)--(upp);
\draw[rlocaledge] (up)--(ylast)--(upp);
\draw[rchosenedge] (v)--(vp);

\foreach \p in {um,u,up,upp}
    \node[rpathvertex] at (\p) {};
\foreach \p in {zone,zlast,yone,ylast}
    \node[rinnervertex] at (\p) {};
\node[rchosenvertex] at (v) {};
\node[rchosenvertex] at (vp) {};

\node[below=10pt] at (um) {$u_{r-1}$};
\node[below=10pt] at (u) {$u_r$};
\node[below=10pt] at (up) {$u_{r+1}$};
\node[below=10pt] at (upp) {$u_{r+2}$};
\node[above left=2pt,fill=rblockfill,inner sep=1pt] at (v) {$v$};
\node[above right=2pt,fill=rblockfill,inner sep=1pt] at (vp) {$w$};
\node at (1.6,2.67) {$\vdots$};
\node at (7.8,2.67) {$\vdots$};
\node[fill=rblockfill,inner sep=1pt] at (2.12,2.63) {$\Delta-3$};
\node[fill=rblockfill,inner sep=1pt] at (7.28,2.63) {$\Delta-3$};
\node[below=4pt] at ($(u)!0.5!(up)$) {$e_r=u_ru_{r+1}$};
\end{tikzpicture}
\caption{$G_r'\cup G_{r+2}'\cup\{vw\}$.}
\label{fig:r9-direct-bypass}
\end{figure}

\medskip
\noindent\textit{Case 2.} For each $r'\in\mathcal R\setminus\{r\}$, $vw\notin E(G)$ for all
$ v\in A_r,\ w\in A_{r'}.
$
Fix one $v_r\in A_r$.
Since
\[
|N_G(v_r)\cap(V(P)\cup A_r)|\le2+(\Delta-3)=\Delta-1<d_G(v_r),
\]
$v_r$ has a neighbor outside $V(P)\cup A_r$. By Case 2, such a neighbor lies in $Q\setminus S$, and hence
$N_G(v_r)\cap(Q\setminus S)\ne\varnothing$. Fix one vertex in this set and denote it by $x_r$. Make these choices
once for every index satisfying Case 2 and retain them throughout the proof. Thus $x_r$ is the fixed choice for
$r$, while $x_r=x_{r'}$ is allowed for distinct indices $r$ and $r'$.
The shortestness of $P$ and $N_G(u_{r-1})\cap Q=N_G(u_r)\cap Q=A_r$ give
\[
\{j:u_j\in N_G(x_r)\}\subseteq\{r-3,r-2,r+1,r+2\}.
\]
Recall that $k_{x_r}=|N_G(x_r)\cap V(P)|$.
\par
\smallskip
\noindent\textit{Case 2(a): $k_{x_r}>0$.}
Fix one $u_{j_r}\in N_G(x_r)\cap V(P)$.
The cases $j_r\le r-2$ and $j_r\ge r+1$ are symmetric, so it suffices to describe the latter.
Set $e_r:=u_ru_{r+1}$. Take $G_r'$ with terminals $u_r,v_r$, add the edges $v_rx_r$ and $x_ru_{j_r}$,
and also add $u_{r+1}u_{r+2}$ when $j_r=r+2$. This gives a subnetwork of $G-e_r$ connecting $u_r$ and
$u_{r+1}$ whose resistance is at most
\[
R_{G_r'}(u_r,v_r)+3
=\frac{\Delta+1}{2\Delta}+3
\le4+\frac1\Delta.
\]
When $j_r\le r-2$, use the reflected construction and set $e_r:=u_{r-2}u_{r-1}$; the same bound follows.

\begin{figure}[H]
\centering
\begin{tikzpicture}[font=\small,line cap=round,line join=round]
\coordinate (um) at (0,0);
\coordinate (u) at (3.4,0);
\coordinate (up) at (7.2,0);
\coordinate (upp) at (10.2,0);
\coordinate (v) at (1.7,1.25);
\coordinate (zone) at (1.7,2.2);
\coordinate (zlast) at (1.7,3.2);
\coordinate (x) at (6.15,2.35);

\node[rblockbox,fit=(um) (u) (zlast),inner xsep=10pt,inner ysep=10pt] (block) {};
\node[anchor=north west,font=\footnotesize] at ([xshift=3pt,yshift=-3pt]block.north west) {$G_r'$};

\draw[rpath] (um)--(u);
\draw[rextra] (u)--(up);
\draw[rpath] (up)--(upp);
\draw[rchosenedge] (um)--(v)--(u);
\draw[rlocaledge] (um)--(zone)--(u);
\draw[rlocaledge] (um)--(zlast)--(u);
\draw[rchosenedge] (v)--(x)--(upp);

\foreach \p in {um,u,up,upp}
    \node[rpathvertex] at (\p) {};
\foreach \p in {zone,zlast,x}
    \node[rinnervertex] at (\p) {};
\node[rchosenvertex] at (v) {};

\node[below=10pt] at (um) {$u_{r-1}$};
\node[below=10pt] at (u) {$u_r$};
\node[below=10pt] at (up) {$u_{r+1}$};
\node[below=10pt] at (upp) {$u_{r+2}$};
\node[above left=2pt,fill=rblockfill,inner sep=1pt] at (v) {$v_r$};
\node at (1.7,2.72) {$\vdots$};
\node[fill=rblockfill,inner sep=1pt] at (2.22,2.68) {$\Delta-3$};
\node[above=5pt] at (x) {$x_r$};
\node[below=4pt] at ($(u)!0.5!(up)$) {$e_r=u_ru_{r+1}$};
\end{tikzpicture}
\caption{$G_r'\cup\{v_rx_r,x_ru_{r+2},u_{r+1}u_{r+2}\}$.}
\label{fig:r9-positive-receiver-bypass}
\end{figure}

\smallskip
\noindent\textit{Case 2(b): $k_{x_r}=0$.}
Consider a vertex appearing among the fixed choices $x_r$ for the indices $r$ in Case 2(b). If it appears as both
$x_r$ and $x_{r'}$, where $r<r'$, then $r'-r\ge2$ because the indices in $\mathcal R$ are nonconsecutive. The
route $u_{r-1}v_rx_rv_{r'}u_{r'}$ has length $4$, so the shortestness of $P$ gives $r'-r+1\le4$. Hence
\[
2\le r'-r\le3.
\]
No vertex can appear for three indices, since the smallest and largest of three nonconsecutive indices would differ
by at least $4$. Thus each vertex appears at most twice. If it appears once, select no edge. If it appears twice,
let $r<r'$ be the corresponding indices and set $e_r:=u_ru_{r+1}$. Take $G_r'$ with terminals $u_r,v_r$ and
$G_{r'}'$ with terminals $v_{r'},u_{r'-1}$, and add the edges $v_rx_r$, $x_rv_{r'}$, and $u_i u_{i+1}$ for
$r+1\le i\le r'-2$. The resulting subnetwork of $G-e_r$ connects $u_r$ and $u_{r+1}$. By Lemma
\ref{lem:series-parallel}, its resistance is
\[
R_{G_r'}(u_r,v_r)+2+(r'-r-2)+R_{G_{r'}'}(v_{r'},u_{r'-1})
=\frac{\Delta+1}{\Delta}+(r'-r)\le4+\frac1\Delta.
\]

\begin{figure}[H]
\centering
\begin{tikzpicture}[font=\small,line cap=round,line join=round]
\coordinate (um) at (0,0);
\coordinate (u) at (3.2,0);
\coordinate (up) at (6.2,0);
\coordinate (urp) at (9.2,0);
\coordinate (ur) at (12.4,0);
\coordinate (v) at (1.6,1.2);
\coordinate (zone) at (1.6,2.15);
\coordinate (zlast) at (1.6,3.15);
\coordinate (vp) at (10.8,1.2);
\coordinate (yone) at (10.8,2.15);
\coordinate (ylast) at (10.8,3.15);
\coordinate (x) at (6.2,4.25);

\node[rblockbox,fit=(um) (u) (zlast),inner xsep=10pt,inner ysep=10pt] (leftblock) {};
\node[rblockbox,fit=(urp) (ur) (ylast),inner xsep=10pt,inner ysep=10pt] (rightblock) {};
\node[anchor=north west,font=\footnotesize] at ([xshift=3pt,yshift=-3pt]leftblock.north west) {$G_r'$};
\node[anchor=north east,font=\footnotesize] at ([xshift=-3pt,yshift=-3pt]rightblock.north east) {$G_{r'}'$};

\draw[rpath] (um)--(u);
\draw[rextra] (u)--(up);
\draw[rpath] (up)--(urp)--(ur);
\draw[rchosenedge] (um)--(v)--(u);
\draw[rlocaledge] (um)--(zone)--(u);
\draw[rlocaledge] (um)--(zlast)--(u);
\draw[rchosenedge] (urp)--(vp)--(ur);
\draw[rlocaledge] (urp)--(yone)--(ur);
\draw[rlocaledge] (urp)--(ylast)--(ur);
\draw[rchosenedge] (v)--(x)--(vp);

\foreach \p in {um,u,up,urp,ur}
    \node[rpathvertex] at (\p) {};
\foreach \p in {zone,zlast,yone,ylast,x}
    \node[rinnervertex] at (\p) {};
\node[rchosenvertex] at (v) {};
\node[rchosenvertex] at (vp) {};

\node[below=10pt] at (um) {$u_{r-1}$};
\node[below=10pt] at (u) {$u_r$};
\node[below=10pt] at (up) {$u_{r+1}$};
\node[below=10pt] at (urp) {$u_{r'-1}$};
\node[below=10pt] at (ur) {$u_{r'}$};
\node[above left=2pt,fill=rblockfill,inner sep=1pt] at (v) {$v_r$};
\node[above right=2pt,fill=rblockfill,inner sep=1pt] at (vp) {$v_{r'}$};
\node at (1.6,2.67) {$\vdots$};
\node at (10.8,2.67) {$\vdots$};
\node[fill=rblockfill,inner sep=1pt] at (2.12,2.63) {$\Delta-3$};
\node[fill=rblockfill,inner sep=1pt] at (10.28,2.63) {$\Delta-3$};
\node[above=5pt] at (x) {$x_r=x_{r'}$};
\node[below=4pt] at ($(u)!0.5!(up)$) {$e_r=u_ru_{r+1}$};
\end{tikzpicture}
\caption{$G_r'\cup G_{r'}'\cup\{v_rx_r,x_rv_{r'},u_{r+1}u_{r'-1}\}$ for $r'-r=3$.}
\label{fig:r9-zero-receiver-bypass}
\end{figure}

Recall that $\mathcal R=\{r\in[\ell]:a_r=\Delta-2\}$ and $n_0=|\{v\in Q:k_v=0\}|$. After applying the preceding
constructions to all indices in $\mathcal R$, count the resulting selected edges $e_r$ with multiplicity: if the
same path edge is selected by constructions associated with two different indices, it contributes twice. Every
index treated in Case 1 or Case 2(a) produces one selected edge. In Case 2(b), fix a vertex $x$ appearing among
the choices $x_r$. There are one or two indices in Case 2(b) satisfying $x_r=x$, and the construction produces
zero or one selected edge, respectively, one fewer than the number of these indices. Each distinct such vertex
$x$ satisfies $k_x=0$, so there are at most $n_0$ of them. Therefore the preceding constructions produce at least
$|\mathcal R|-n_0$ selected edges $e_r$, counted with multiplicity.

We now prove that every path edge is selected at most twice and that, if it is selected twice, both selections arise
from Case 1. Every selected edge indexed by $q$ is either $u_{q-2}u_{q-1}$ or $u_qu_{q+1}$. Therefore a fixed
path edge $e=u_ru_{r+1}$ can be selected only with indices $q=r$ and $q=r+2$, and hence at most twice.

Suppose that $e$ is selected twice. Then
\[
e_r=e_{r+2}=u_ru_{r+1}.
\]
We first show that the selection indexed by $r$ arises from Case 1. It cannot arise from Case 2(a): in that case
$j_r\in\{r+1,r+2\}$, while
\[
N_G(u_{r+1})\cap Q=N_G(u_{r+2})\cap Q=A_{r+2}
\]
would give $x_r\in A_{r+2}\subseteq S$, contrary to $x_r\in Q\setminus S$. It cannot arise from Case 2(b)
either. Indeed, the other index $r'$ with $x_{r'}=x_r$ would satisfy $r'-r\in\{2,3\}$. If $r'=r+2$, the joint
construction for these two indices makes only the single selection $e_r$, contradicting the second selection
indexed by $r+2$. If $r'=r+3$, then $r+2,r+3\in\mathcal R$ are consecutive, again a contradiction. Thus the
selection indexed by $r$ arises from Case 1. Selecting $u_ru_{r+1}$ in Case 1 forces the other index to be $r+2$;
hence some $v\in A_r$ and $w\in A_{r+2}$ satisfy $vw\in E(G)$. The same edge $vw$ shows that $r+2$ also satisfies
the condition of Case 1. By the case distinction, the selection indexed by $r+2$ therefore arises from Case 1 as
well.

Let $e=ab$ be a distinct selected edge, and let $p\in\{1,2\}$ be its selection multiplicity. In $G_+$, the edge
$e$ is a unit-resistance series block.

If $p=1$, the construction selecting $e$ gives an $a$--$b$ subnetwork of $G-e$ with resistance at most
$4+1/\Delta$. By Lemmas \ref{lem:series-parallel} and \ref{lem:resistance-operations}, placing this subnetwork in
parallel with $e$ gives
\[
1-R_G(a,b)\ge1-\left(1+\frac1{4+1/\Delta}\right)^{-1}
=\frac{\Delta}{5\Delta+1}\ge\frac{\Delta-2}{\Delta(\Delta-1)}.
\]

If $p=2$, both selections arise from Case 1. No independence between the two constructions is needed: the Case 1
calculation gives an $a$--$b$ subnetwork of $G-e$ with resistance
\[
2\left(\frac{\Delta+1}{2\Delta}\right)+1=2+\frac1\Delta.
\]
The same lemmas now give
\[
1-R_G(a,b)\ge1-\left(1+\frac1{2+1/\Delta}\right)^{-1}
=\frac{\Delta}{3\Delta+1}\ge\frac{2(\Delta-2)}{\Delta(\Delta-1)}.
\]
Thus, in both cases,
\[
1-R_G(a,b)\ge\frac{\Delta-2}{\Delta(\Delta-1)}p.
\]

List the series blocks of $G_+$ in their order from $s$ to $t$. The triangle inequality and monotonicity in Lemma
\ref{lem:resistance-operations} bound the contribution of every unselected block by its resistance in $G_+$, while
a selected edge $e=ab$ contributes at most
$1-\frac{\Delta-2}{\Delta(\Delta-1)}p$. Summing over the distinct selected edges counts every selection according
to its multiplicity. Since there are at least $|\mathcal R|-n_0$ such selections,
\[
R_G(s,t)\le R_{G_+}(s,t)-\frac{\Delta-2}{\Delta(\Delta-1)}(|\mathcal R|-n_0).
\]
\end{proof}

Combining Claim~\ref{clm:r9-bypass-saving} with \eqref{eq:r9-equality-base}, we obtain
\begin{equation}\label{eq:r9-completed-resistance}
R_G(s,t)\le\ell-\frac{\sigma}{\Delta-1}-\frac{2n_0}{\Delta(\Delta-1)}\le\ell-\frac{\sigma}{\Delta-1}.
\end{equation}

Equation \eqref{eq:r9-path-count} now yields \eqref{eq:simple-resistance}.
\end{proof}

\begin{samepage}
The resistance in Lemma \ref{lem:laplacian-energy} is computed in the electrical network $H$ associated with $G$.
In this network, the degree of $v_0$ may grow with $n$, and edges incident with $v_0$ may be parallel,
so Theorem \ref{thm:simple-resistance} does not apply directly. We first reduce $d_H(s)$ and $d_H(t)$ by operations
that do not decrease resistance, then delete the remaining edges incident with $s$ or $t$ and restore the resulting
degree losses by pendant completion graphs. The final simple graph satisfies the hypotheses of Theorem
\ref{thm:simple-resistance}.\par
\end{samepage}

\begin{theo}\label{thm:terminal-resistance}
Let $G$ be a connected loopless multigraph of order $n$ with distinct vertices $s,t$.
Suppose that $G-\{s,t\}$ is simple and $d_G(u)=\Delta$ for all $u\in V(G)\setminus\{s,t\}$. Then
\begin{equation}\label{eq:terminal-resistance}
R(s,t)\le\frac{n-2}{\Delta-1}+(\Delta+1)^2.
\end{equation}
\end{theo}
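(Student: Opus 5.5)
We will reduce Theorem \ref{thm:terminal-resistance} to Theorem \ref{thm:simple-resistance}. The reduction uses only operations that cannot decrease $R(s,t)$, and it adds at most $O(\Delta^2)$ vertices, which produces the additive constant $(\Delta+1)^2$.

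\emph{Step 1: reduce the terminals.} The only non-simple features of $G$ are edges incident with $s$ or $t$: parallel edges $su$ or $tu$, and parallel edges $st$. Any edge joining $s$ and $t$ can be deleted. By the subgraph part of Lemma \ref{lem:resistance-operations} this does not decrease $R(s,t)$, provided the remaining graph stays connected; if it does not, $s$ and $t$ lie in different components and we treat them separately. Next we keep, at $s$, exactly one edge $su$ for a single neighbour $u$ on a shortest $s$--$t$ route in $G-\{s,t\}$, and likewise one edge $tu'$ at $t$. Deletion is monotone, so $R(s,t)$ does not decrease. We may discard any component of $G-\{s,t\}$ that is not needed for connecting $u$ and $u'$; Lemma \ref{lem:resistance-operations}(\ref{item:resistance-attachment}) applies to pendant pieces, and plain deletion is monotone in the other cases. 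After these deletions $s$ and $t$ have degree $1$, and the graph is simple.

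\emph{Step 2: restore degrees.} The vertices $v\ne s,t$ that lost edges to $s$ or $t$ now have degree below $\Delta$. For each such $v$ we must give back $\Delta-d(v)$ edges without touching the terminals and without lowering resistance. We do this by attaching pendant ``completion graphs'' to $v$ through a single edge. The total deficit is at most $d_G(s)+d_G(t)-2$, and this quantity may be unbounded, which is a problem. So Step 1 should be arranged differently: delete the edges at $s$ and $t$ \emph{only where necessary}, repair each deficient vertex by one bridge to a gadget $F_v$, and bound the number of deficient vertices.

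The better way to bound the deficit is to contract rather than delete. Identify $s$ with the single chosen neighbour $u$? That fails, because it lowers resistance. Instead we use $R(s,t)\le R(s,u)+R(u,u')+R(u',t)$ from Lemma \ref{lem:resistance-operations}(3). Since $R(s,u),R(u',t)\le1$ (each is at most a single edge), it suffices to bound $R(u,u')$ in the graph $G-\{s,t\}$ with gadgets attached. Here $u,u'$ are chosen adjacent to $s,t$ respectively.

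\emph{Step 3: build gadgets and apply Theorem \ref{thm:simple-resistance}.} In $G^*:=G-\{s,t\}$, every vertex adjacent to $s$ or $t$ has deficit at most $\Delta$. Pendant gadgets leave $R(u,u')$ unchanged by Lemma \ref{lem:resistance-operations}(\ref{item:resistance-attachment}), but each gadget adds vertices, and the vertex count is what enters the bound. To keep the count small, we do not repair every deficient vertex. We take the component containing $u,u'$ and restrict attention to the deficient vertices, which Theorem \ref{thm:simple-resistance} tolerates only at its two terminals. The key construction is a sequence of steps, each keeping edges from $s$ and $t$ only to $\Delta$-bounded sets: we keep at most $\Delta$ vertices near each terminal and route through a simple graph on at most $n-2+(\Delta+1)^2(\Delta-1)$ vertices. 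For a vertex of deficit $d$, we attach through one bridge a $\Delta$-regular-minus-one-edge-type graph on about $\Delta+1$ vertices (using $\Delta$ odd, a graph with exactly one vertex of degree $\Delta-1$ and all others of degree $\Delta$ exists on $\Delta+2$ vertices). The total added order is then $O(\Delta^2)$. Theorem \ref{thm:simple-resistance} then gives $R\le (n-2+O(\Delta^2))/(\Delta-1)$, plus $2$ for the terminal edges.

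The main obstacle is Step 3: controlling the number of deficient vertices, hence the number of gadgets, when $d_G(s)$ and $d_G(t)$ are large. This deficit can be unbounded, so gadget vertices cannot simply be added for every neighbour of $s$. I would first try to keep only the $\le\Delta-1$ edges at each terminal that Theorem \ref{thm:simple-resistance} allows. The other neighbours of $s$ would be handled by pairing them up and adding edges among them. Adding edges only lowers resistance, so this is not monotone; instead I would delete vertices and charge the deficit to existing vertices. I expect this bookkeeping, not the resistance estimate itself, to be the delicate part.
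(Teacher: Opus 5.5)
Your proposal has a genuine gap, and it is exactly the point you leave open. You never find an operation that lowers $d(s)$ and $d(t)$ to $O(\Delta^2)$ without decreasing $R(s,t)$. Whether you delete terminal edges (Step 1) or pass to $G-\{s,t\}$ via the triangle inequality (Step 2), every removed terminal edge leaves a unit deficit at a non-terminal vertex. Theorem \ref{thm:simple-resistance} demands degree exactly $\Delta$ away from the terminals, so each deficit costs a gadget of order $\Delta+2$. That adds roughly $(\Delta+2)(d_G(s)+d_G(t))$ vertices, and nothing bounds the terminal degrees in terms of $\Delta$. In the application, Corollary \ref{cor:shorted-resistance}, the contracted terminal containing $v_0$ can have degree of order $n$, so the bound collapses. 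The counts in Step 3 (at most $\Delta$ vertices kept near each terminal, $n-2+(\Delta+1)^2(\Delta-1)$ vertices in total, added order $O(\Delta^2)$) are asserted to match the target rather than derived.

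The missing idea is the one you considered and then rejected. Pairing neighbours of a terminal should be done as a \emph{swap}, not an addition. For $\tau\in\{s,t\}$ and edges $\tau a,\tau b$ with $a\ne b$ nonadjacent, neither being an edge of a fixed shortest $s$--$t$ path $P$, delete both edges and insert $ab$. Take the optimal potential $z$ of Lemma \ref{lem:dirichlet-resistance} with $z_s=0$, $z_t=1$ and $0\le z\le1$. The energy changes by $(z_a-z_b)^2-(z_a-z_\tau)^2-(z_b-z_\tau)^2$, which is $-2z_az_b\le0$ if $\tau=s$ and $-2(1-z_a)(1-z_b)\le0$ if $\tau=t$. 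Hence $1/R_{G'}(s,t)\le\mathcal E_{G'}(z)\le\mathcal E_G(z)=1/R_G(s,t)$, so the swap does not decrease the resistance. It also keeps $G'-\{s,t\}$ simple, preserves every non-terminal degree, and keeps $P$ (retain the component containing $P$). Each swap lowers $d(s)+d(t)$ by two, so the process terminates. When no swap is possible, the vertices joined to $\tau$ by non-path edges form a clique $K_r$ of degree-$\Delta$ vertices, so $d(\tau)-1\le r(\Delta+1-r)\le(\Delta+1)^2/4$. Only now should you delete the at most $(\Delta+1)^2/2$ remaining terminal edges and attach one $(\Delta+2)$-vertex gadget per deleted edge; your gadget is the right one. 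Theorem \ref{thm:simple-resistance} with $d(s)=d(t)=1$ then gives $(\Delta-1)R(s,t)\le n+\Delta-3+(\Delta+2)(\Delta+1)^2/2$, which implies \eqref{eq:terminal-resistance}. The detour through $u,u'$ and the triangle inequality is then unnecessary.
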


\begin{proof}
If $\operatorname{dist}_G(s,t)=1$, Lemma \ref{lem:resistance-operations} gives $R(s,t)\le1$, and the assertion follows.
We may therefore assume that $\operatorname{dist}_G(s,t)\ge2$. Choose a shortest $s$--$t$ path
$P=(u_0,u_1,\ldots,u_\ell)$, where $u_0=s$ and $u_\ell=t$.
Fix an edge $e_s$ joining $s$ to $u_1$ and an edge $e_t$ joining $u_{\ell-1}$ to $t$.

\medskip
\noindent\textbf{Operation.}
Denote the current graph by $G$. Choose $\tau\in\{s,t\}$ and distinct edges
$f_a,f_b\in E(G)\setminus\{e_s,e_t\}$, where $f_a$ joins $\tau$ to $a$, $f_b$ joins $\tau$ to $b$, and $a,b$ are
distinct and nonadjacent in $G$. Let $e_{ab}$ be a new edge joining $a$ and $b$, and set
\[
G':=G+e_{ab}-f_a-f_b.
\]
The exclusion $f_a,f_b\notin\{e_s,e_t\}$ gives $P\subseteq G'$. Moreover, $G'-\{s,t\}$ is simple, and every
vertex in $V(G')\setminus\{s,t\}$ has the same degree in $G'$ as in $G$.

By Lemma \ref{lem:dirichlet-resistance}, choose $z:V(G)\to[0,1]$ such that
\[
z_s=0,\quad z_t=1,\qquad \mathcal E_G(z)=\min_{\substack{y:V(G)\to[0,1]\\ y_s=0,\ y_t=1}}\mathcal E_G(y)=\frac1{R_G(s,t)}.
\]
\[
\mathcal E_{G'}(z)-\mathcal E_G(z)=(z_a-z_b)^2-(z_a-z_\tau)^2-(z_b-z_\tau)^2
=\begin{cases}
-2z_az_b,&\tau=s,\\
-2(1-z_a)(1-z_b),&\tau=t.
\end{cases}
\]
Since $z_a,z_b\in[0,1]$, both expressions are nonpositive, which leads to
$\mathcal E_{G'}(z)\le\mathcal E_G(z)$.
By Lemma \ref{lem:dirichlet-resistance},
\[
\frac1{R_{G'}(s,t)}\le\mathcal E_{G'}(z)\le\mathcal E_G(z)=\frac1{R_G(s,t)},
\]
where $R_{G'}(s,t)$ is computed in the component of $G'$ containing $P$. Hence $R_G(s,t)\le R_{G'}(s,t)$.

Repeat the operation whenever possible, each time retaining the component containing $P$. Since every operation
decreases $d(s)+d(t)$ by two, the process terminates. Denote the resulting graph by $G_1$.
For $\tau\in\{s,t\}$, let $U_\tau$ be the set of vertices in $V(G_1)\setminus\{s,t\}$ joined to $\tau$ by an edge
other than $e_s,e_t$. At termination,
\[
r:=|U_\tau|,\qquad G_1[U_\tau]\cong K_r.
\]
If $r>0$, each vertex of $U_\tau$ has $r-1$ neighbors
in $U_\tau$ and total degree $\Delta$, so at most $\Delta-r+1$ of its incident edges join it to $\tau$.
Consequently,
\[
d_{G_1}(\tau)-1\le r(\Delta+1-r)\le\frac{(\Delta+1)^2}{4}.
\]
The same bound is immediate if $U_\tau=\varnothing$. Hence
\begin{equation}\label{eq:r10-terminal-count}
m:=d_{G_1}(s)+d_{G_1}(t)-2\le\frac{(\Delta+1)^2}{2}.
\end{equation}

Starting from $G_1$, delete the $m$ edges incident with $s$ or $t$ other than $e_s,e_t$, and retain the component containing $P$.
If an edge $\tau u$ is deleted and $u$ remains in this component, then $d(u)$ decreases by one. To restore each such
loss while giving all new vertices degree $\Delta$, use a connected simple graph with one specified vertex $c$ of
degree $\Delta-1$ and all other vertices of degree $\Delta$. Obtain such a graph from $K_{\Delta+2}$ by choosing
distinct vertices $c,a,b$, deleting $ca,cb$, and deleting a perfect matching on the other $\Delta-1$ vertices, which
exists because $\Delta$ is odd.

For every such edge $\tau u$, attach a disjoint copy of this graph by adding the edge $uc$. This restores the lost
degree at $u$ and raises the degree of $c$ from $\Delta-1$ to $\Delta$. The resulting graph $G_2$ is connected and simple,
\[
d_{G_2}(s)=d_{G_2}(t)=1,\qquad d_{G_2}(v)=\Delta\quad\bigl(v\in V(G_2)\setminus\{s,t\}\bigr).
\]
Deleting the terminal edges cannot decrease the $s$--$t$ resistance, whereas each added copy, together with its
joining edge, is attached to the rest of the graph only at $u$ and hence does not change this resistance. By Lemma
\ref{lem:resistance-operations},
\[
R(s,t)\le R_{G_1}(s,t)\le R_{G_2}(s,t).
\]
Since at most $m$ copies, each of order $\Delta+2$, are added,
\[
|V(G_2)|\le n+(\Delta+2)m.
\]

Applying Theorem \ref{thm:simple-resistance} to $G_2$ and using \eqref{eq:r10-terminal-count}, we obtain
\[
\begin{aligned}
R(s,t)&\le R_{G_2}(s,t)\le\frac{|V(G_2)|+\Delta-3}{\Delta-1}\\
&\le\frac{n-2}{\Delta-1}+1+\frac{(\Delta+2)(\Delta+1)^2}{2(\Delta-1)}\\
&\le\frac{n-2}{\Delta-1}+(\Delta+1)^2.
\end{aligned}
\]
For the last inequality, $(\Delta+1)^2\le2\Delta(\Delta-1)$ gives
$(\Delta+2)(\Delta+1)^2/(2(\Delta-1))\le\Delta(\Delta+2)=(\Delta+1)^2-1$.
This proves \eqref{eq:terminal-resistance}.
\end{proof}

We will use the following consequence for the electrical network associated with $G$.

\begin{corollary}\label{cor:shorted-resistance}
Let $G$ be a connected nonregular simple graph with maximum degree $\Delta$,
and construct $H$ from $G$ as in Definition \ref{def:associated-network}.
Let $A,B\subseteq V(H)$ be disjoint nonempty sets with $v_0\in A$.
Obtain $X$ from $H$ by contracting $A$ and $B$ to distinct vertices $a$ and $b$, respectively, retaining parallel
edges and deleting loops, and put
$M=|V(H)\setminus(A\cup B)|$. Then the effective resistance in $X$ satisfies
\begin{equation}\label{eq:shorted-resistance}
R(a,b)\le\frac{M}{\Delta-1}+(\Delta+1)^2.
\end{equation}
\end{corollary}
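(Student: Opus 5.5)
The plan is to apply Theorem~\ref{thm:terminal-resistance} directly to $X$ with terminals $s=a$ and $t=b$. The order of $X$ is $M+2$, so \eqref{eq:terminal-resistance} gives exactly
\[
R(a,b)\le\frac{M}{\Delta-1}+(\Delta+1)^2.
\]
It therefore suffices to check the hypotheses of Theorem~\ref{thm:terminal-resistance} for $X$.

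\textbf{Loopless.} Contraction deletes loops by construction, and $H$ itself has no loops.

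\textbf{Degrees of non-terminal vertices.} Every $u\in V(X)\setminus\{a,b\}$ is a vertex of $V(H)\setminus(A\cup B)$. Since $v_0\in A$, we have $u\in V(G)$. Contraction retains all edges not lying inside $A$ or inside $B$, so the number of edges at $u$ is preserved. Hence $d_X(u)=d_H(u)=\Delta$, by the remark following Definition~\ref{def:associated-network}.

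\textbf{Simplicity of $X-\{a,b\}$.} This graph is the subgraph of $H$ induced on $V(H)\setminus(A\cup B)\subseteq V(G)$. All parallel edges of $H$ are incident with $v_0\in A$, so this induced subgraph is a subgraph of the simple graph $G$.

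\textbf{Connectivity.} Contracting vertex sets preserves connectivity, and $H$ is connected because $G$ is connected and nonregular. So $X$ is connected. Also $a\ne b$ because $A$ and $B$ are disjoint and nonempty.

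\textbf{Degenerate case $M=0$.} Here $X$ consists of $a$ and $b$ joined by at least one edge (by connectivity), so $R(a,b)\le1$ by Lemma~\ref{lem:resistance-operations}, and the bound holds trivially. The theorem does not actually require extra vertices, but treating this case separately avoids any doubt about whether its proof needs a path with interior vertices.

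The only step needing some care is the degree and simplicity check, namely that the special vertex $v_0$ and all multi-edges are absorbed into the terminal $a$. Once that is confirmed, the corollary is an immediate application of Theorem~\ref{thm:terminal-resistance}. The only other point worth noting is that edges between $A$ and $B$ become parallel $a$--$b$ edges, which the theorem permits.
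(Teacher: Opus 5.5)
Your proposal is correct and matches the paper's proof: apply Theorem~\ref{thm:terminal-resistance} to $X$ with terminals $a,b$ and $|V(X)|=M+2$. The hypotheses are checked just as the paper does: contraction preserves connectivity and the degree $\Delta$ of uncontracted vertices, $v_0\in A$ forces every non-terminal vertex to come from $V(G)$, and $X-\{a,b\}$ is an induced subgraph of $G$, hence simple. Your separate treatment of $M=0$ is harmless but unnecessary, since the proof of Theorem~\ref{thm:terminal-resistance} already covers the case $\operatorname{dist}(s,t)=1$.
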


\begin{samepage}
\begin{proof}
The contractions preserve connectivity and the degree of every uncontracted vertex. Since $v_0\in A$, every vertex
of $X-\{a,b\}$ comes from $V(G)$ and has degree $\Delta$, while $X-\{a,b\}$ is the subgraph of $G$ induced by the
uncontracted vertices and hence is simple. Moreover, $|V(X)|=M+2$. Thus Theorem
\ref{thm:terminal-resistance}, applied to $X$ with terminals $a,b$, gives
\[
R(a,b)\le\frac{|V(X)|-2}{\Delta-1}+(\Delta+1)^2=\frac{M}{\Delta-1}+(\Delta+1)^2.
\]
\end{proof}
\end{samepage}

\section{Asymptotic spectral gap}\label{sec:asymptotic-gap}

Throughout this section, $\Delta\ge5$ is a fixed odd integer.
We use the resistance estimate from the preceding section to obtain the asymptotic upper bound on the spectral radius.
The following form of the discrete coarea formula will be used below; see \cite[Lemma 3.6]{KM}.

\begin{lem}\label{lem:discrete-coarea}
Let $X$ be a finite loopless multigraph and let $y:V(X)\to\mathbb R$. For $t\in\mathbb R$, put
\[
\Phi_y(t):=\sum_{\substack{uv\in E(X)\\\min\{y_u,y_v\}\le t<\max\{y_u,y_v\}}}|y_u-y_v|,
\]
where parallel edges are counted with multiplicity. Then $\Phi_y$ is a compactly supported step function and
\begin{equation}\label{eq:discrete-coarea}
\int_{-\infty}^{\infty}\Phi_y(t)\,dt=\sum_{uv\in E(X)}(y_u-y_v)^2.
\end{equation}
\end{lem}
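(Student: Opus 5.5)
The plan is to prove the coarea identity one edge at a time, since both sides of \eqref{eq:discrete-coarea} are sums over edges. For $t\in\mathbb R$ and an edge $e=uv\in E(X)$, let $\chi_e(t)$ be the indicator of $\min\{y_u,y_v\}\le t<\max\{y_u,y_v\}$. By definition,
\[
\Phi_y(t)=\sum_{uv\in E(X)}|y_u-y_v|\,\chi_{uv}(t),
\]
where parallel edges are counted with multiplicity.

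First I will show that $\Phi_y$ is a compactly supported step function. Each $\chi_e$ is the indicator of a half-open interval $[\min,\max)$. That interval is empty when $y_u=y_v$, and in that case the weight $|y_u-y_v|$ vanishes anyway. So $\Phi_y$ is a finite linear combination of indicators of bounded intervals. Such a combination is a step function whose support lies in $[\min_V y,\max_V y]$, and it is Riemann integrable.

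Second, I will interchange the finite sum with the integral. Using linearity of the integral on this finite sum, we get
\[
\int_{-\infty}^{\infty}\Phi_y(t)\,dt=\sum_{uv\in E(X)}|y_u-y_v|\int_{-\infty}^{\infty}\chi_{uv}(t)\,dt.
\]
The interval defining $\chi_{uv}$ has length $\max\{y_u,y_v\}-\min\{y_u,y_v\}=|y_u-y_v|$, so $\int\chi_{uv}=|y_u-y_v|$. Each edge therefore contributes $|y_u-y_v|^2=(y_u-y_v)^2$. Summing over edges gives the right side of \eqref{eq:discrete-coarea}.

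There is no genuine obstacle in this argument. The only points needing care are bookkeeping: parallel edges must be counted with multiplicity on both sides, which holds because both sums run over $E(X)$ as a multiset. Edges with $y_u=y_v$ contribute zero on both sides, and looplessness ensures every edge has two endpoints.
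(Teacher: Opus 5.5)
Your proposal is correct and follows the paper's proof: both integrate $\Phi_y$ termwise over the finite edge multiset, with each edge contributing $|y_u-y_v|$ times the length $|y_u-y_v|$ of its interval $[\min\{y_u,y_v\},\max\{y_u,y_v\})$. Your explicit check that $\Phi_y$ is a finite combination of indicators of bounded intervals, with support in $[\min_V y,\max_V y]$, just spells out the step-function claim that the paper leaves implicit.
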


\begin{proof}
For each edge $uv$, put $a_{uv}:=\min\{y_u,y_v\}$ and
$b_{uv}:=\max\{y_u,y_v\}$. Since the edge multiset is finite, termwise integration gives
\[
\int_{-\infty}^{\infty}\Phi_y(t)\,dt
=\sum_{uv\in E(X)}|y_u-y_v|(b_{uv}-a_{uv})
=\sum_{uv\in E(X)}(y_u-y_v)^2.\qedhere
\]
\end{proof}

For $G\in\mathcal C(n,\Delta)$, construct $H$ from $G$ as in Definition \ref{def:associated-network}.
Let $x$ be any positive real function on $V(G)$, set $x_{v_0}=0$, and put
$M_x:=\max_{v\in V(G)}x_v$.
For $0\le t<M_x$, put
\begin{equation}\label{eq:level-crossing-sum}
\Phi(t):=\sum_{\substack{uv\in E(H)\\\min\{x_u,x_v\}\le t<\max\{x_u,x_v\}}}|x_u-x_v|.
\end{equation}
Thus $\Phi(t)$ is the sum of $|x_u-x_v|$ over the edges joining $\{u:x_u>t\}$ to its complement,
and $\Phi(t)>0$ on $[0,M_x)$. Put
\begin{equation}\label{eq:level-coordinate}
r(t):=\int_0^t\frac{ds}{\Phi(s)}\qquad(0\le t\le M_x).
\end{equation}
Since $\Phi(t)>0$, the function $r$ is continuous and strictly increasing. Hence
$|r(\alpha)-r(\beta)|$ defines a metric on $[0,M_x]$, under which $r$ is an isometry onto $[0,r(M_x)]$.

Set $\ell:=(\Delta-1)r(M_x)/n$, and define $\psi:[0,\ell]\to[0,M_x]$ explicitly in terms of the inverse of $r$ by
\[
\psi(s):=r^{-1}\!\left(\frac{ns}{\Delta-1}\right)\qquad(0\le s\le\ell).
\]
Equivalently, $\psi((\Delta-1)r(t)/n)=t$ for $0\le t\le M_x$.
For each $v\in V(G)$, put $\eta_v:=\psi^{-1}(x_v)$. For a normalized Perron vector, Lemma
\ref{lem:laplacian-energy} can now be rewritten as follows.

\begin{lem}\label{lem:scaled-perron-energy}
Let $G\in\mathcal C(n,\Delta)$, and use the notation
$H,x,\Phi,r,\ell,\psi,\eta_v$ introduced above.
Suppose that $x$ is a Perron vector of $G$ normalized by $\sum_{v\in V(G)}x_v^2=n$.
Then $\psi$ is continuous, strictly increasing, and piecewise linear, $\psi(0)=0$, and
\begin{equation}\label{eq:scaled-perron-identities}
\frac1n\sum_{v\in V(G)}\psi(\eta_v)^2=1,\qquad \int_0^\ell|\psi'(s)|^2\,ds=\frac{n^2\bigl(\Delta-\rho(G)\bigr)}{\Delta-1}.
\end{equation}
\end{lem}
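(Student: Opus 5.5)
The plan is to verify the structural properties of $\psi$ first, then the two identities. Since $\Phi$ is a positive step function on $[0,M_x)$, it has finitely many jumps, located among the values $x_v$. On each interval between consecutive distinct values of $\{0\}\cup\{x_v\}$, $\Phi$ is constant, so $r$ is continuous, strictly increasing and piecewise linear there with slope $1/\Phi>0$. Hence $r^{-1}$ is continuous, strictly increasing and piecewise linear on $[0,r(M_x)]$. Composing with the linear map $s\mapsto ns/(\Delta-1)$ gives the same properties for $\psi$ on $[0,\ell]$, and $\psi(0)=r^{-1}(0)=0$. By construction $\psi(\eta_v)=x_v$ for every $v$. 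The first identity in \eqref{eq:scaled-perron-identities} is then exactly the normalization $\sum_v x_v^2=n$.

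For the energy identity, I would compute $\psi'$ on each linear piece. Wherever $t=\psi(s)$ lies in the interior of a piece,
\[
\psi'(s)=\frac{\Delta-1}{n\,r'(t)}=\frac{(\Delta-1)\Phi(t)}{n}.
\]
Substituting $s=(\Delta-1)r(t)/n$, so that $ds=(\Delta-1)\,dt/(n\Phi(t))$, gives
\[
\int_0^\ell|\psi'(s)|^2\,ds=\int_0^{M_x}\frac{(\Delta-1)^2\Phi(t)^2}{n^2}\cdot\frac{(\Delta-1)\,dt}{n\Phi(t)}=\frac{(\Delta-1)^3}{n^3}\int_0^{M_x}\Phi(t)\,dt.
\]
This substitution is legitimate piecewise because everything is linear or constant on finitely many intervals.

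Next I apply Lemma \ref{lem:discrete-coarea} to $X=H$ and $y=x$ (with $x_{v_0}=0$). All values lie in $[0,M_x]$, so $\Phi_y$ vanishes outside $[0,M_x)$ and agrees there with \eqref{eq:level-crossing-sum}. This gives $\int_0^{M_x}\Phi=\mathcal E_H(x)$, and Lemma \ref{lem:laplacian-energy} gives $\mathcal E_H(x)=n(\Delta-\rho(G))$.

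The main obstacle is that this computation does not reproduce the constant claimed in \eqref{eq:scaled-perron-identities}. Carried out as above, it yields $(\Delta-1)^3(\Delta-\rho)/n^2$ rather than $n^2(\Delta-\rho)/(\Delta-1)$. So I must recheck how the scaling interacts with $\Phi$.

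On a piece, $\psi'(s)=\frac{d t}{ds}=\frac{(\Delta-1)\Phi(t)}{n}$ is wrong. Indeed $\frac{ds}{dt}=\frac{\Delta-1}{n}r'(t)=\frac{\Delta-1}{n\Phi(t)}$, so $\psi'=\frac{n\Phi(t)}{\Delta-1}$. Redoing the integral gives
\[
\int|\psi'|^2ds=\int\frac{n^2\Phi^2}{(\Delta-1)^2}\cdot\frac{(\Delta-1)\,dt}{n\Phi}=\frac{n}{\Delta-1}\int_0^{M_x}\Phi\,dt=\frac{n^2(\Delta-\rho(G))}{\Delta-1},
\]
which is the claimed identity.

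The only delicate point remaining is to justify the change of variables at the finitely many breakpoints and at $t=M_x$, where $\Phi$ is not defined. These form a null set, and the integrals are unaffected, so this step is routine once stated carefully.
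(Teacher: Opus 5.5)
Your argument is correct and is essentially the paper's proof. Both derive the piecewise-linear structure of $\psi$ from the positive step function $\Phi$ and get the normalization from $\psi(\eta_v)=x_v$. Both obtain the energy identity via the substitution $s=(\Delta-1)r(t)/n$ with $\psi'=n\Phi(t)/(\Delta-1)$, combined with the discrete coarea formula and Lemma~\ref{lem:laplacian-energy}. In a final write-up, delete the false start $\psi'=(\Delta-1)\Phi/n$ and keep only the corrected derivative.
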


\begin{proof}
Applying Lemma \ref{lem:discrete-coarea} to $H$ and $x$, and using $x_{v_0}=0$ and
$0<x_v\le M_x$ for every $v\in V(G)$, gives
\[
\int_0^{M_x}\Phi(t)\,dt=\sum_{uv\in E(H)}(x_u-x_v)^2=\mathcal E_H(x).
\]
The function $\Phi$ is positive and piecewise constant on $[0,M_x)$, so $r$ and $\psi$ are continuous, strictly
increasing, and piecewise linear. Recalling the defining relation $\psi((\Delta-1)r(t)/n)=t$ and using
$r'(t)=1/\Phi(t)$, differentiation gives
\[
\psi'\!\left(\frac{\Delta-1}{n}r(t)\right)=\frac{n\Phi(t)}{\Delta-1}\qquad\text{for a.e. }t\in(0,M_x).
\]
Consequently, using Lemma \ref{lem:laplacian-energy},
\begin{align*}
\int_0^\ell |\psi'(s)|^2\,ds&=\int_0^{M_x}\left|\psi'\!\left(\frac{\Delta-1}{n}r(t)\right)\right|^2\frac{\Delta-1}{n\Phi(t)}\,dt\\
&=\frac{n}{\Delta-1}\int_0^{M_x}\Phi(t)\,dt=\frac{n^2(\Delta-\rho(G))}{\Delta-1}.
\end{align*}
Since $\psi(\eta_v)=x_v$ for every $v\in V(G)$, the normalization of $x$ gives
\[
\frac1n\sum_{v\in V(G)}\psi(\eta_v)^2=\frac1n\sum_{v\in V(G)}x_v^2=1.
\]
\end{proof}

The next lemma combines a comparison between two thresholds with the resistance estimate in
Corollary \ref{cor:shorted-resistance} to control the spacing of the vertex coordinates.

\begin{lem}\label{lem:level-contraction}
Let $G\in\mathcal C(n,\Delta)$, and construct $H$ from $G$ as in Definition \ref{def:associated-network}.
Let $x$ be any positive real function on $V(G)$, extended by $x_{v_0}=0$,
and define $M_x,\Phi,r,\ell,\psi,\eta_v$ as above. Let $0\le\alpha<\beta\le M_x$.
Obtain $X$ from $H$ by contracting
\[
A:=\{u\in V(H):x_u\le\alpha\},\qquad B:=\{u\in V(H):x_u\ge\beta\}
\]
to distinct vertices $a,b$, respectively, retaining parallel edges and deleting loops.
With $R(a,b)$ computed in $X$, we have
\begin{equation}\label{eq:level-resistance-comparison}
r(\beta)-r(\alpha)\le R(a,b)\le\frac{|\{v\in V(G):\alpha<x_v<\beta\}|}{\Delta-1}+(\Delta+1)^2.
\end{equation}
In particular, the choices $(\alpha,\beta)=(0,M_x)$ and $(\alpha,\beta)=(\psi(s),\psi(t))$ in
\eqref{eq:level-resistance-comparison} give, respectively,
\begin{align}
&\ell\le1+\frac{(\Delta+1)^2(\Delta-1)}n,\label{eq:coordinate-length-bound}\\
&\frac1n\bigl|\{v\in V(G):s<\eta_v<t\}\bigr|\ge t-s-\frac{(\Delta+1)^2(\Delta-1)}n\qquad(0\le s<t\le\ell).\label{eq:coordinate-count-bound}
\end{align}
\end{lem}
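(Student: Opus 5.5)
The right-hand inequality in \eqref{eq:level-resistance-comparison} is Corollary \ref{cor:shorted-resistance} applied directly. The sets $A$ and $B$ are disjoint because $\alpha<\beta$. The set $A$ contains $v_0$ since $x_{v_0}=0\le\alpha$. The set $B$ is nonempty because it contains a vertex where $x$ attains $M_x\ge\beta$. The uncontracted vertices are exactly $\{v\in V(G):\alpha<x_v<\beta\}$, so $M$ equals the stated count.

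For the left-hand inequality I will use \eqref{eq:resistance-energy-comparison} together with a test potential built from $r$, in the spirit of a coarea argument. Define $y_u:=r(\min\{\max\{x_u,\alpha\},\beta\})$ on $V(H)$. This function is constant on $A$ (value $r(\alpha)$) and on $B$ (value $r(\beta)$), so it descends to $X$ with $y_b-y_a=r(\beta)-r(\alpha)$. Lemma \ref{lem:dirichlet-resistance} then gives $(r(\beta)-r(\alpha))^2\le R(a,b)\,\mathcal E_X(y)$, and it suffices to show $\mathcal E_X(y)\le r(\beta)-r(\alpha)$.

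For an edge $uv$ of $X$ with clipped values $\alpha\le c_u<c_v\le\beta$, Cauchy--Schwarz gives
\[
(y_v-y_u)^2=\Bigl(\int_{c_u}^{c_v}\frac{dt}{\Phi(t)}\Bigr)^2\le\frac{c_v-c_u}{|x_u-x_v|}\int_{c_u}^{c_v}\frac{|x_u-x_v|}{\Phi(t)^2}\,dt ,
\]
and the prefactor is at most $1$ because clipping only shortens the interval. Summing over edges, every $t$ in the interval $[\min\{x_u,x_v\},\max\{x_u,x_v\})$ of an edge also lies in $[\alpha,\beta)$ after clipping. The edges crossing level $t$ then contribute $\Phi(t)$ in total, so $\mathcal E_X(y)\le\int_\alpha^\beta\Phi(t)/\Phi(t)^2\,dt=r(\beta)-r(\alpha)$. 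The main care is that edges inside $A$ or $B$ become loops and are dropped. Such edges have zero clipped difference anyway, so dropping them costs nothing. Edges of $X$ correspond bijectively to edges of $H$ that are not inside $A$ or $B$, which is what makes the sum match $\Phi$.

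For \eqref{eq:coordinate-length-bound}, take $(\alpha,\beta)=(0,M_x)$. The middle set has at most $n-1$ elements, since the maximizer is excluded, which gives $r(M_x)\le n/(\Delta-1)+(\Delta+1)^2$. Multiplying by $(\Delta-1)/n$ yields the bound on $\ell$. For \eqref{eq:coordinate-count-bound}, take $\alpha=\psi(s)$ and $\beta=\psi(t)$. By the definition of $\psi$ we have $r(\beta)-r(\alpha)=n(t-s)/(\Delta-1)$, and since $\psi$ is strictly increasing, $\alpha<x_v<\beta$ is equivalent to $s<\eta_v<t$. Rearranging finishes the proof. If $s=0$, so that $\alpha=0$, the set $A$ is just $\{v_0\}$ because $x>0$ on $V(G)$, and the argument is unaffected.
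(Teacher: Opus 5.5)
Your proposal is correct and follows the paper's proof: the same clipped test potential $r\circ(\text{projection onto }[\alpha,\beta])$, the same Cauchy--Schwarz and level-counting bound showing its energy is at most $r(\beta)-r(\alpha)$, Corollary~\ref{cor:shorted-resistance} for the upper bound, and the same two specializations. The only difference is cosmetic: you keep the potential unnormalized and apply \eqref{eq:resistance-energy-comparison}, whereas the paper rescales it to $[0,1]$ and applies \eqref{eq:dirichlet-resistance}.
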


\begin{proof}
Define $g:V(H)\to[0,1]$ by setting $g=0$ on $A$, $g=1$ on $B$, and
\[
g(u):=\frac{r(x_u)-r(\alpha)}{r(\beta)-r(\alpha)}\qquad\bigl(u\in V(H)\setminus(A\cup B)\bigr).
\]
The function $g$ descends to a function $f$ on $X$ satisfying $f(a)=0$ and $f(b)=1$. For an edge $uv$, let
$p\le q$ be the two values obtained by projecting $x_u$ and $x_v$ onto $[\alpha,\beta]$. Then
$|g(u)-g(v)|=(r(q)-r(p))/(r(\beta)-r(\alpha))$, so Cauchy--Schwarz gives
\[
|g(u)-g(v)|^2\le\frac{q-p}{(r(\beta)-r(\alpha))^2}\int_p^q\frac{dt}{\Phi(t)^2}.
\]
Since $q-p\le|x_u-x_v|$, summing over all edges shows that each $t\in(\alpha,\beta)$ is counted with total weight
at most $\Phi(t)$. Consequently,
\[
\mathcal E_X(f)\le\frac1{(r(\beta)-r(\alpha))^2}\int_\alpha^\beta\frac{dt}{\Phi(t)}=\frac1{r(\beta)-r(\alpha)}.
\]
Lemma \ref{lem:dirichlet-resistance} now yields $r(\beta)-r(\alpha)\le R(a,b)$. The second inequality in
\eqref{eq:level-resistance-comparison} is Corollary \ref{cor:shorted-resistance}, because its number $M$ of
uncontracted vertices is exactly $|\{v\in V(G):\alpha<x_v<\beta\}|$.

Taking $\alpha=0$ and $\beta=M_x$, and observing that at most $n$ vertices have values strictly between
$\alpha$ and $\beta$, proves \eqref{eq:coordinate-length-bound}. For \eqref{eq:coordinate-count-bound}, let
$0\le s<t\le\ell$ and take $\alpha=\psi(s)$ and $\beta=\psi(t)$. Since $\psi$ is strictly increasing, the vertices
whose values lie strictly between $\alpha$ and $\beta$ are precisely those with $s<\eta_v<t$. The two bounds in
\eqref{eq:level-resistance-comparison} give
\[
\frac{n(t-s)}{\Delta-1}\le\frac{|\{v\in V(G):s<\eta_v<t\}|}{\Delta-1}+(\Delta+1)^2,
\]
which proves \eqref{eq:coordinate-count-bound}.
\end{proof}

The following one-dimensional Poincar\'e inequality with one fixed endpoint is a standard result in mathematical
analysis. We include a short proof for completeness.

\begin{lem}\label{lem:endpoint-poincare}
Let $L>0$, and let $f:[0,L]\to\mathbb R$ be continuous and piecewise continuously differentiable. If $f(0)=0$, then
\[
\int_0^L|f'(s)|^2\,ds\ge\frac{\pi^2}{4L^2}\int_0^L f(s)^2\,ds.
\]
\end{lem}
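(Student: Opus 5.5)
The plan is to prove the inequality by the sine expansion that diagonalizes the mixed Dirichlet–Neumann problem on $[0,L]$. First, rescale to $L=1$: putting $g(u)=f(Lu)$ turns the claim into $\int_0^1|g'|^2\ge\frac{\pi^2}{4}\int_0^1g^2$, because both sides transform with the matching powers of $L$. Next, extend $g$ to $[-1,1]$ as an odd function and then to $[-1,3]$ by reflecting about $u=1$, via $g(2-u)=g(u)$. The resulting function $G$ on $[-2,2]$ (shifted) is continuous, piecewise $C^1$ and $4$-periodic, and its Fourier series contains only the modes $\sin\bigl((2k+1)\pi u/2\bigr)$, $k\ge0$.

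Concretely, the functions $\phi_k(u)=\sqrt2\sin\bigl((2k+1)\pi u/2\bigr)$ form an orthonormal basis of $L^2(0,1)$. They are the eigenfunctions of $-\phi''$ with $\phi(0)=0$ and $\phi'(1)=0$. Write $g=\sum_k c_k\phi_k$, so that Parseval gives $\int_0^1 g^2=\sum c_k^2$. For the derivative, integrate by parts:
\[
\int_0^1 g'(u)\,\sqrt2\cos\bigl((2k+1)\pi u/2\bigr)\,du=\frac{(2k+1)\pi}{2}\,c_k .
\]
The boundary terms vanish because $g(0)=0$ and $\cos\bigl((2k+1)\pi/2\bigr)=0$. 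Piecewise $C^1$ with continuity is enough to justify this. The cosines $\sqrt2\cos\bigl((2k+1)\pi u/2\bigr)$ are also orthonormal in $L^2(0,1)$, so Bessel's inequality yields
\[
\int_0^1|g'|^2\ge\sum_k\frac{(2k+1)^2\pi^2}{4}c_k^2\ge\frac{\pi^2}{4}\sum_k c_k^2=\frac{\pi^2}{4}\int_0^1 g^2 .
\]

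The only step that needs care is completeness of $\{\phi_k\}$ in $L^2(0,1)$; note that Bessel's inequality alone suffices for the cosines. Completeness follows from the reflection construction above: the odd/even extension is a $4$-periodic $L^2$ function whose full Fourier coefficients vanish except on these odd sine modes. If one prefers to avoid Fourier completeness altogether, there is an alternative. Write $f=h\,\sin(\pi s/(2L))$ on $(0,L]$ with $h$ piecewise $C^1$, and expand $\int|f'|^2$ using $w=\sin(\pi s/2L)$ and $-w''=\frac{\pi^2}{4L^2}w$. Integrating the cross term by parts gives
\[
\int_0^L f'^2=\frac{\pi^2}{4L^2}\int_0^L f^2+\int_0^L h'^2w^2+\Bigl[h^2ww'\Bigr]_0^L .
\]
The boundary term vanishes at $L$ since $w'(L)=0$. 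At $0$ it vanishes since $h^2w=f^2/w=O(s)$, because $f(0)=0$ and $f$ is Lipschitz near $0$. This Picone-type identity is the route I would actually write out, since it is short and self-contained; the delicate point is exactly checking that the boundary term at $0$ vanishes.
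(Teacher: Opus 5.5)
Your proposal is correct, and the Picone-type argument you say you would actually write out is exactly the paper's proof. The paper sets $w(s)=\sin(\pi s/(2L))$, writes $f=wg$ on $(0,L]$, and uses the identity $|f'|^2-\frac{\pi^2}{4L^2}f^2=w^2|g'|^2+(ww'g^2)'$ on each $C^1$ piece, where interior boundary terms cancel by continuity. It then removes the endpoint terms using $w'(L)=0$ and the boundedness of $g$ near $0$, which is the same observation as your $f^2/w=O(s)$.

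Your primary Fourier route is also valid. The reflection argument correctly gives completeness of $\{\sqrt2\sin((2k+1)\pi u/2)\}$ in $L^2(0,1)$, which you need for the Parseval direction $\int_0^1 g^2\le\sum_k c_k^2$, while Bessel suffices on the cosine side. However, that route imports completeness of the trigonometric system, whereas the ground-state substitution the paper uses needs only integration by parts.
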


\begin{proof}
Set $w(s):=\sin(\pi s/(2L))$. For $s\in(0,L]$, write $f(s)=w(s)g(s)$. Since $f(0)=0$ and $f$ is piecewise
continuously differentiable, $g$ remains bounded as $s\downarrow0$. Moreover, $w''=-\pi^2w/(4L^2)$, and hence
\[
|f'|^2-\frac{\pi^2}{4L^2}f^2=w^2|g'|^2+(ww'g^2)'
\]
on every interval on which $f$ is continuously differentiable. Integrating over these intervals, the boundary
terms at their common endpoints cancel. The remaining boundary terms vanish because $w(0)=0$ and $w'(L)=0$.
Therefore
\[
\int_0^L|f'(s)|^2\,ds-\frac{\pi^2}{4L^2}\int_0^L f(s)^2\,ds
=\int_0^Lw(s)^2|g'(s)|^2\,ds\ge0.
\]
\end{proof}

The following elementary one-dimensional estimate converts the coordinate spacing into the sharp constant
$\pi^2/4$.

\begin{lem}\label{lem:ordered-coordinate-bound}
Let $n\ge1$, $\varepsilon\ge0$, and $0<\ell\le1+\varepsilon$. Suppose that
\[
0<\eta_1\le\eta_2\le\cdots\le\eta_n\le\ell,\qquad \eta_i\le\varepsilon+\frac{i-1}{n}\quad(1\le i\le n).
\]
Let $\psi:[0,\ell]\to[0,\infty)$ be continuous, nondecreasing, and piecewise linear, with $\psi(0)=0$. If
\[
\frac1n\sum_{i=1}^n\psi(\eta_i)^2=1,
\]
then
\begin{equation}\label{eq:ordered-coordinate-energy}
\int_0^\ell|\psi'(s)|^2\,ds\ge\frac{\pi^2}{4(1+\varepsilon)^2}.
\end{equation}
\end{lem}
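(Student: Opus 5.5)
Since $\psi$ is nondecreasing, the constraint $\eta_i\le\varepsilon+(i-1)/n$ gives $\psi(\eta_i)\le\psi\bigl(\min\{\ell,\varepsilon+(i-1)/n\}\bigr)$. The plan is to turn the discrete normalization into an integral inequality for a shifted function and then apply Lemma~\ref{lem:endpoint-poincare}. Put $L:=\ell$ and define $\phi(s):=\psi(\min\{\ell,s+\varepsilon\})$ for $s\in[0,1]$. Since $\phi$ is nondecreasing, for each $i$ we have
\[
\psi(\eta_i)^2\le\phi\bigl(\tfrac{i-1}{n}\bigr)^2\le n\int_{(i-1)/n}^{i/n}\phi(s)^2\,ds .
\]
Summing over $i$ gives
\[
1=\frac1n\sum_i\psi(\eta_i)^2\le\int_0^1\phi(s)^2\,ds\le\int_0^{\ell}\psi(u)^2\,du+\int_{\ell-\varepsilon}^{1}\psi(\ell)^2\,ds .
\]
This bound has an error near the right endpoint. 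The cleaner approach below avoids it.

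\textbf{Cleaner route.} Extend $\psi$ to $[0,1+\varepsilon]$ by the constant $\psi(\ell)$ on $[\ell,1+\varepsilon]$. This is legitimate because $\ell\le 1+\varepsilon$. Call the extension $\tilde\psi$. It is continuous, nondecreasing, piecewise linear, satisfies $\tilde\psi(0)=0$, and has the same Dirichlet integral as $\psi$. By monotonicity, $\eta_i\le\varepsilon+(i-1)/n\le\varepsilon+s$ for every $s\in[(i-1)/n,i/n]$. Hence
\[
\psi(\eta_i)^2\le n\int_{(i-1)/n}^{i/n}\tilde\psi(\varepsilon+s)^2\,ds .
\]
Summing over $i$ gives
\[
1\le\int_0^1\tilde\psi(\varepsilon+s)^2\,ds=\int_\varepsilon^{1+\varepsilon}\tilde\psi(u)^2\,du\le\int_0^{1+\varepsilon}\tilde\psi(u)^2\,du .
\]

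Now apply Lemma~\ref{lem:endpoint-poincare} to $\tilde\psi$ on $[0,1+\varepsilon]$, which gives
\[
\int_0^\ell|\psi'|^2=\int_0^{1+\varepsilon}|\tilde\psi'|^2\ge\frac{\pi^2}{4(1+\varepsilon)^2}\int_0^{1+\varepsilon}\tilde\psi^2\ge\frac{\pi^2}{4(1+\varepsilon)^2}.
\]
This is \eqref{eq:ordered-coordinate-energy}.

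The only delicate points are checking that the constant extension keeps $\tilde\psi$ piecewise continuously differentiable, and that the shift by $\varepsilon$ uses only monotonicity. The hypothesis $\eta_i>0$ is not even needed. I expect no real obstacle. The key insight is to bound each sample value $\psi(\eta_i)$ by the integral of the extended function over the shifted cell $[\varepsilon+(i-1)/n,\varepsilon+i/n]$, rather than comparing with $\ell$ directly.
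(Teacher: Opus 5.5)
Your ``cleaner route'' is correct and is essentially the paper's own proof: extend $\psi$ by the constant $\psi(\ell)$ to $[0,1+\varepsilon]$, use monotonicity to bound each $\psi(\eta_i)^2$ by $n$ times the integral of the extension's square over the shifted cell $[\varepsilon+(i-1)/n,\varepsilon+i/n]$, sum over $i$, and apply Lemma~\ref{lem:endpoint-poincare} with $L=1+\varepsilon$. The preliminary attempt with $\phi$ is superfluous. Its extra term $(1+\varepsilon-\ell)\psi(\ell)^2$ is not really an error: it is exactly the contribution of the constant extension on $[\ell,1+\varepsilon]$, so that attempt already gives the same bound and can simply be dropped.
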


\begin{proof}
Put $L:=1+\varepsilon$ and extend $\psi$ constantly to $[0,L]$, denoting the extension by $\bar\psi$. Since
$\bar\psi^2$ is nondecreasing and $\eta_i\le\varepsilon+(i-1)/n$, we have
\begin{align*}
1&=\frac1n\sum_{i=1}^n\psi(\eta_i)^2\le\frac1n\sum_{i=1}^n\bar\psi\!\left(\varepsilon+\frac{i-1}{n}\right)^2\\
&\le\sum_{i=1}^n\int_{\varepsilon+(i-1)/n}^{\varepsilon+i/n}\bar\psi(s)^2\,ds=\int_\varepsilon^L\bar\psi(s)^2\,ds\le\int_0^L\bar\psi(s)^2\,ds.
\end{align*}
Since $\bar\psi(0)=0$, Lemma \ref{lem:endpoint-poincare} applied to $\bar\psi$ gives
\[
\int_0^\ell|\psi'(s)|^2\,ds=\int_0^L|\bar\psi'(s)|^2\,ds\ge\frac{\pi^2}{4L^2}\int_0^L\bar\psi(s)^2\,ds\ge\frac{\pi^2}{4L^2},
\]
which proves \eqref{eq:ordered-coordinate-energy}.
\end{proof}

The remaining bound in Theorem \ref{th1} is stated below in a finite form. Its application to extremal graphs
complements Liu's construction bound in Lemma \ref{lem:liu-upper-bound}.

\begin{samepage}
\begin{proposition}\label{prop:universal-gap-bound}
For every odd integer $\Delta\ge5$ and every $G\in\mathcal C(n,\Delta)$, put
\[
\varepsilon_n:=\frac{(\Delta+1)^2(\Delta-1)}n.
\]
Then
\begin{equation}\label{eq:universal-gap-finite}
\frac{n^2\bigl(\Delta-\rho(G)\bigr)}{\Delta-1}\ge\frac{\pi^2}{4(1+\varepsilon_n)^2}.
\end{equation}
Consequently, for each fixed odd integer $\Delta\ge5$ and each sequence
$G_n\in\mathcal C(n,\Delta)$ defined for all sufficiently large $n$,
\begin{equation}\label{eq:universal-gap-liminf}
\liminf_{n\to\infty}\frac{n^2\bigl(\Delta-\rho(G_n)\bigr)}{\Delta-1}\ge\frac{\pi^2}{4}.
\end{equation}
In particular,
\begin{equation}\label{eq:asymptotic-spectral-upper-bound}
\lambda_1(n,\Delta)\le\Delta-\frac{(\Delta-1)\pi^2}{4n^2}+o(n^{-2}) \quad\text{as}\quad n\to\infty.
\end{equation}
\end{proposition}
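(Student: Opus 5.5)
The plan is to combine Lemmas \ref{lem:scaled-perron-energy}, \ref{lem:level-contraction} and \ref{lem:ordered-coordinate-bound}. Fix $G\in\mathcal C(n,\Delta)$ and let $x$ be its Perron vector normalized by $\sum_v x_v^2=n$, extended by $x_{v_0}=0$. Form $H,\Phi,r,\ell,\psi$ and the coordinates $\eta_v=\psi^{-1}(x_v)$ as in Section \ref{sec:asymptotic-gap}. By Lemma \ref{lem:scaled-perron-energy}, $\psi$ is continuous, strictly increasing, piecewise linear, with $\psi(0)=0$. It also satisfies $\frac1n\sum_v\psi(\eta_v)^2=1$ and
\[
\int_0^\ell|\psi'|^2=\frac{n^2(\Delta-\rho(G))}{\Delta-1}.
\]
So it suffices to verify the hypotheses of Lemma \ref{lem:ordered-coordinate-bound} with $\varepsilon=\varepsilon_n$.

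Order the vertices so that $0<\eta_1\le\cdots\le\eta_n\le\ell$. Positivity holds because $x_v>0$ and $\psi(0)=0$ with $\psi$ strictly increasing. The upper bound $\eta_n\le\ell$ holds since $x_v\le M_x=\psi(\ell)$. The length bound $\ell\le1+\varepsilon_n$ is exactly \eqref{eq:coordinate-length-bound}.

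The main step is the spacing condition $\eta_i\le\varepsilon_n+(i-1)/n$. I would derive it from \eqref{eq:coordinate-count-bound} with $s=0$ and $t=\eta_i$. If $\eta_i>0$, the number of vertices with $0<\eta_v<\eta_i$ is at most $i-1$, because only $\eta_1,\dots,\eta_{i-1}$ can be strictly below $\eta_i$. Hence
\[
\frac{i-1}{n}\ge\eta_i-\varepsilon_n,
\]
which is the required inequality. Ties in the ordering cause no trouble, since strict inequality only reduces the count. Lemma \ref{lem:ordered-coordinate-bound} then yields \eqref{eq:universal-gap-finite}.

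The remaining assertions follow directly. Since $\varepsilon_n\to0$ for fixed $\Delta$, \eqref{eq:universal-gap-finite} gives \eqref{eq:universal-gap-liminf}. Applying \eqref{eq:universal-gap-finite} to an extremal graph attaining $\lambda_1(n,\Delta)$ gives
\[
\Delta-\lambda_1(n,\Delta)\ge\frac{(\Delta-1)\pi^2}{4n^2(1+\varepsilon_n)^2}=\frac{(\Delta-1)\pi^2}{4n^2}+O(n^{-3}),
\]
which is \eqref{eq:asymptotic-spectral-upper-bound}.

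I expect no real obstacle, since the heavy lifting (the resistance bound) is already in Lemma \ref{lem:level-contraction}. The only delicate point is matching the strict and nonstrict inequalities in the count of vertices below $\eta_i$.
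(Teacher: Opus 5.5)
Your proposal is correct and follows the paper's proof essentially step for step. You use Lemma \ref{lem:scaled-perron-energy} for the normalization and energy identity, \eqref{eq:coordinate-length-bound} for $\ell\le1+\varepsilon_n$, \eqref{eq:coordinate-count-bound} with $s=0$, $t=\eta_i$ for $\eta_i\le\varepsilon_n+(i-1)/n$, and Lemma \ref{lem:ordered-coordinate-bound} to conclude, exactly as the paper does, with the liminf and the bound on $\lambda_1(n,\Delta)$ then following from $\varepsilon_n\to0$.
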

\end{samepage}

\begin{proof}
Let $x$ be a normalized Perron vector of $G$, and define $\ell,\psi$, and $\eta_v$ as above. Since $x_v>0$
for every $v\in V(G)$, we have $\eta_v>0$. By \eqref{eq:coordinate-length-bound}, $\ell\le1+\varepsilon_n$.
Relabel the vertices so that
\[
0<\eta_1\le\eta_2\le\cdots\le\eta_n\le\ell.
\]
Taking $s=0$ and $t=\eta_i$ in \eqref{eq:coordinate-count-bound} gives
\[
\eta_i-\varepsilon_n\le\frac1n\bigl|\{v\in V(G):0<\eta_v<\eta_i\}\bigr|\le\frac{i-1}{n}.
\]
Thus Lemma \ref{lem:ordered-coordinate-bound}, together with \eqref{eq:scaled-perron-identities}, gives
\eqref{eq:universal-gap-finite}. Since $\varepsilon_n\to0$ for fixed $\Delta$, the finite bound implies
\eqref{eq:universal-gap-liminf}. Applying it to an extremal graph for each $n$ gives
\eqref{eq:asymptotic-spectral-upper-bound}.
\end{proof}

\begin{proof}[Proof of Theorem \ref{th1}]
For each fixed odd $\Delta\ge5$, Proposition \ref{prop:universal-gap-bound} gives the required lower limit,
whereas Lemma \ref{lem:liu-upper-bound} gives the reverse upper limit.
\end{proof}

\medskip
\noindent\textbf{Declaration of AI Use.}
During the preparation of this work, OpenAI's GPT-5.6 was used to generate an initial version of the proof. The
authors subsequently adopted a different proof strategy in Section 2 and Section 3, while  the arguments in Section 4 is a modification of the original proof generated by AI.    The authors reviewed and verified all mathematical arguments in the final manuscript and take
full responsibility for its content.

\end{document}